\begin{document}


\renewcommand{\theequation}{\thesection.\arabic{equation}}

 \newtheorem{theorem}{Theorem}[section]
 \newtheorem{prop}[theorem]{Proposition}
 \newtheorem{lemma}[theorem]{Lemma}
 \newtheorem{defin}[theorem]{Definition}
 \newtheorem{corollary}[theorem]{Corollary}
 \newtheorem{property}[theorem]{}
 \newtheorem{remark}[theorem]{Remark}
\newtheorem{example}[theorem]{Example}
\newtheorem{examples}[theorem]{Examples}
\newtheorem{claim}{Claim}



\newcommand{\fun}[3]{\mbox{$#1\colon #2 \rightarrow #3$}}
\newcommand{\group}{$\ell$-group}
\newcommand{\ideal}{$\ell$-ideal}
\newcommand{\smsm}{\smallsetminus}
\newcommand{\bg}{\begin}
\newcommand{\e}{\end}
\newcommand{\ee}{\e{enumerate}}
\newcommand{\bal}{\bg{align*}}
\newcommand{\eal}{\end{align*}}
\newcommand{\bd}{\bg{description}}

\newcommand{\ed}{\e{description}}
\newcommand{\be}{\bg{enumerate}}
\newcommand{\ei}{\e{itemize}}
\newcommand{\beq}{\bg{eqnarray*}}
\newcommand{\eeqn}{\e{eqnarray}}
\newcommand{\beqn}{\bg{eqnarray}}
\newcommand{\eeq}{\e{eqnarray*}}
\newcommand{\bequ}{\bg{equation}}
\newcommand{\eequ}{\e{equation}}
\newcommand{\bi}{\bg{itemize}}
\newcommand{\mbf}[1]{\boldsymbol{#1}}
\newcommand{\Ra}{\Rightarrow}
\newcommand{\ra}{\rightarrow}
\newcommand{\La}{\Leftarrow}
\newcommand{\Lra}{\Leftrightarrow}
\newcommand{\rdot}{\overline{\odot}}
\newcommand{\igual}[1]{[\![#1]\!]}
\newcommand{\proof}{{\em Proof:\/}}
\newcommand{\alg}{BL-algebra}
\newcommand{\no}{\noindent}
\newcommand{\cqd}{\hfill$\Box$\medskip}
\newcommand{\mr}[1]{\buildrel {#1} \over \longrightarrow}
\newcommand{\ml}[1]{\buildrel {#1} \over \longleftarrow}
\newcommand{\cc}{\mathcal}
\newcommand{\A}{\mbf{A}}
\newcommand{\B}{\mbf{B}}
\newcommand{\sh}{\mathfrak}
\newcommand{\bb}{\mathbb}
\newcommand{\US}{\langle U_S \rangle}
\newcommand{\F}{\mathfrak{F}}
\newcommand{\Vt}[1]{\bb{V}_{\mbf{#1}}}
\newcommand{\Vna}[1]{\bb{V}_{\mathbf{\nabla}_{#1}}}
\newcommand{\free}[2]{\F_{\bb{#1}}(#2)}
\newcommand{\lukn}{$\widehat{\mbf{L_{n+1}}}$}
\newcommand{\dn}{\neg\neg}
\newcommand{\D}{\mbf{D}}
\newcommand{\T}{\mathbf{T}}
\newcommand{\C}{\mathbf{C}}
\newcommand{\Pa}{\mathbf{P}}
\newcommand{\Sa}{\mathbf{S}}

\newcommand{\sideremark}[1]{\marginpar{\scriptsize
\flushleft{\hspace{0pt}\fbox{\fbox{\parbox{2.5cm}{#1}}}}}}
\newcommand{\leftsideremark}[1]{\scriptsize
\flushleft{\hspace{0pt}\fbox{\fbox{\parbox{2.5cm}{#1}}}} }


 \title{A categorical equivalence 
for Stonean residuated lattices}
 \author{\textsc{M. Busaniche,  R. Cignoli
  and M. Marcos}}

 \date{}

 \maketitle

\abstract{Distributive Stonean residuated lattices are closely related to Stone algebras since their bounded lattice reduct is a Stone algebra. In the present work we follow the ideas presented by Chen and Gr\"{a}tzer and try to apply them for the case of Stonean residuated lattices. Given a Stonean residuated lattice, we consider the triple formed by its Boolean skeleton, its algebra of dense elements and a connecting map.  We define a category whose objects are these triples and suitably defined morphisms, and prove that we have a categorical equivalence between this category and that of Stonean residuated lattices. We compare our results with other works and show some applications of the equivalence.}

 \section*{Introduction and overview}

%
%
%

The theory of substructural logics has developed abruptly in the last years, as it provides a common framework to treat many different logical systems (\cite{GJKO}). Parallel to this growth, the study of residuated lattices has also evolved, since these structures serve as basis for most of the algebraic semantics of substructural logics. In particular, bounded commutative residuated lattices correspond to the Gentzen systems with exchange and weakening (${\bf FL_{ew}}$). The main result of the present work is a categorical equivalence for a subcategory of residuated lattices: Stonean residuated lattices. Stonean residuated lattices can be characterized as the greatest subclass of bounded residuated lattices satisfying that the double negation is a retract onto the Boolean skeleton. They form a variety that we will denote by $\mathbb{SRL}$, which contains among its most important subvarieties the variety of  Boolean algebras, G\"{o}del algebras, product algebras and pseudocomplemented MTL-algebras. Elements of $\mathbb{SRL}$ are closely related to Stone algebras (also known as Stone lattices, see \cite{Gra78}), since the bounded lattice reduct ${\bf L}({\bf A})$ of a distributive Stonean residuated lattice ${\bf A}\in \mathbb{SRL}$ is a Stone algebra.

The motivations for the equivalence that we present are two: on one hand, in 2015, Montagna and Ugolini presented in \cite{MontUgo} a categorical equivalence between product algebras and a category of triples formed by a Boolean algebra, a cancellative hoop and a operator connecting the two structures. Product algebras form a subvariety of pseudocomplemented residuated lattices and the double negation is a retraction onto their Boolean skeletons, therefore they form a subvariety of $\mathbb{SRL}$. On the other hand, in the last century several papers (\cite{CheGra1}, \cite{Kat} and \cite{MadRam}) studied the connection of Stone algebras and another category of triples whose objects are formed by a Boolean algebra, a distributive lattice and a connecting map. These results were generalized in \cite{KatMed} and \cite{Sch1}. In both cases the key fact to obtain the equivalences is the existence of a Boolean retraction, but the third components of the triples (the connecting maps) are quite different. For the case of \cite{MontUgo}, this map must be characterized satisfying some forced properties, while in the other case the map is simply a bounded lattice morphism.

 Although it seems natural to extend both categorical equivalences for the case of  $\mathbb{SRL}$, the triple construction of \cite{CheGra1,Kat} is more appropriate. Therefore our work consists on generalizing that result, and afterwards comparing it with the one of \cite{MontUgo}. This task is far from straightforward, since difficulties appear from the extra structure and new tools to solve the problem are needed. Given an algebra ${\A}\in \mathbb{SRL}$ we consider the triple formed by its Boolean skeleton $\B(\A)$, its algebra of dense elements $\D(\A)$ and a bounded lattice morphism $\phi_A$ from $\B(\A)$ into the lattice of filters of $\D(\A)$.  We define the category  $\sh T$ whose objects are these triples and suitably defined morphisms. Then a functor $\T$ from the category of  $\mathbb{SRL}$ into the category  of triples is introduced. This functor is immediately seen to be faithful and we prove that it is full. To show that it is essentially surjective we adapt the sheaf construction presented in \cite{MadRam} for Stone lattices. Finally, we present some applications of the equivalence.

\section{Background} %

\subsection{Residuated lattices}

 An \textit{integral residuated lattice-ordered  commutative monoid},
 or \textit{residuated lattice} for short, is an algebra $\mbf{A} =
 \langle A, \ast, \to, \lor, \land,  \top\rangle $
  of type $\langle
 2,2,2,2,0\rangle$
  such that
 $\langle A; \ast, \top\rangle$ is a commutative monoid,
 $\mbf{L}(\mbf{A}) = \langle A; \lor, \land, \top\rangle$ is a
 lattice with greatest element $\top,$ and  the following residuation
 condition holds: \bg{equation}\label{resid} x \ast y \leq z, \,\,\,
 \mbox{iff} \,\,\, x \leq y \to z \e{equation} where $x, y, z$ denote
 arbitrary elements of $A$  and $\leq$ is the order given by the
 lattice structure, which is called \textit{the natural order of
 $\A$}.  It is well known that  residuated lattices form a variety, that we shall denote $\bb{RL}$. We will use the same name for the algebraic category of residuated lattices.

\medskip

We list for further reference, some well known
 consequences of (\ref{resid}) that will be used through this
 paper.
  \bg{lemma}\label{basic}
  The following properties hold true
  in any residuated lattice $\A$, where $x,y,z$ denote arbitrary
  elements of $A$:
  \bg{enumerate}[$(i)$]
  \item\label{order} $x \leq y $
  if and only if $x \to y = \top$,
  \item\label{timx} $\top \to x =
  x$,
\item\label{B} $(x \to y) \to ((y \to z) \to (x \to z)) = \top$,
 \item\label{K} $x \to (y \to x) = \top$,
  \item\label{intercambio} $(x \ast y) \to z= x \to (y \to
  z)$,

  \item \label{DeMorgan} $(x \lor y) \to z=(x \to z) \land (y \to z))$.
  \item \label{impmonotder} $x \to (y \land z) = (x \to y) \land (x \to z)$,
  \item \label{distprod} $x \ast (y \lor z) = (x \ast y) \lor (x \ast z)$.
  \e{enumerate}\e{lemma}

 By an {\em implicative filter\/} or {\em i-filter} of a residuated
 lattice $\A$ we mean a subset $F \subseteq A$ satisfying that $\top\in F$ and if $x,x \to y$ are in $F$, then $y \in F.$
 Each i-filter $F$  is the universe of a
 subalgebra of $\A$, which we shall denote $\mbf{F}$, and for each $X\subseteq A$ we denote by $\langle X\rangle$ the i-filter generated by $X$. i.e., the intersection of all i-filters that contain $X$.

  The set of i-filters of $\A$, ordered by inclusion, becomes a bounded lattice, that will be denoted by $\cc{F}_i(\A)$. In this lattice $\{\top\}$ is the bottom, $A$ is the top, and for i-filters $F_1$ and $F_2$, $F_1 \land F_2 = F_1 \cap F_2$ and $F_1 \lor F_2 = \langle F_1 \cup F_2 \rangle$.

 Given an i-filter $F$ of a residuated lattice  $\A$,  the binary
  relation
 $$\theta(F) := \{(x,y) \in A \times A : x \leftrightarrow y \in F\},$$ where $x \leftrightarrow y = (x \to y) \ast (y \to x)$,
  is a congruence on $\A$ such that $F=\top/\theta(F)$, the equivalence class of $\top$. As a matter of fact, the
  correspondence $F \mapsto \theta (F)$ is an  isomorphism
  from $\cc{F}_i(\A)$  onto the  the lattice of congruences of
  $\A$,  whose inverse is given by the map $\theta\mapsto \top/\theta$.

   We will write simply $\A/F$
  instead of $\A/\theta(F)$, and $a/F$  instead of $a/\theta(F)$,   the equivalence
  class determined by $a\in A$.

The variety $\mathbb{RL}$ is \textit{arithmetical}, i.e, it is both congruence-distributive and congruence-permutable (see \cite[Page 94]{GJKO}, \cite[Chapter II \S12]{BurSan}).
 Therefore we have the following form of the Chinese Remainder Theorem (see \cite[Chapter 5, Ex. 68]{Gra68}, \cite[Chapter 4, Ex. 10]{MMT}):
  \bg{theorem}\label{teo:crt} Let $\A \in \bb{RL}$. Given elements $a_1, \ldots, a_n$ in $A$ and i-filters $F_1, \ldots, F_n$ satisfying $a_i/(F_i\lor F_j) = a_j/(F_i \lor F_j)$ for $1 \leq i, j \leq n$, there exists $a \in A$ such that $a/F_i = a_i/F_i$ for all $1 \leq i \leq n$.\e{theorem}

   Since $x \ast y \leq x \land y$, it follows that i-filters are also lattice filters of $\mathbf{L}(\A)$, but the converse is not true:  For each $x \in A$, $$[x) = \{y \in A : x \leq y\}$$ is the lattice-filter generated by $x$, $[x) \subseteq \langle \{x\} \rangle$ but the equality holds if and only if $x^{2} = x$.

   We shall denote by $\cc{F}_l(\A)$ the bounded lattice of (lattice) filters of $\mbf{L}(\A)$.

   \begin{remark}\label{lattice_filter_dist}
   It is well known that $\cc{F}_l(\A)$ is distributive if and only if $\mbf{L}(\A)$ is distributive. In contrast, $\cc{F}_i(\A)$, being isomorphic to the congruence lattice of an algebra in an arithmetical variety, is always distributive.
\end{remark}

 \subsection{Bounded Residuated lattices}

 A \textit{bounded  residuated lattice} is an algebra $\mbf{A} =
 \langle A, \ast, \to, \lor, \land, \top, \bot\rangle$  such that
 $\langle A, \ast, \to,
 \lor, \land, \top\rangle$ is a residuated lattice, and
 $\bot$ is the smallest element of the lattice $\mbf{L}(\mbf{A})$.
 Bounded residuated lattices form a variety, that we shall denote by $\bb{BRL}$.
 For each $\mbf A\in \bb{BRL}$ and $x\in A$ we consider the  unary operation $\neg x =: x \to
 \bot. $

 In the next lemma we collect, for further reference, some properties of bounded residuated lattices.

 \bg{lemma}\label{basicbounded} (\cite[Lemma 1.2]{CigTo12}) The following identities and quasi-identities hold true
  in any residuated lattice $\mbf A$:
  \bg{enumerate}[$a)$]
 \item\label{bb1}  $x \leq y\Ra\neg y \leq \neg x$.
 \item\label{bb3} $x \le \neg \neg x   $.
 \item\label{bb2} $\neg x = \neg \neg \neg x$.
 \item\label{bb4} $x \ra \neg y = y \ra \neg x$.
 \item\label{bb5} $x \ra \neg y = \neg \neg x \ra \neg y$.
 \item\label{bb6} $\neg \neg(x \ra  \neg y) = x \ra  \neg y$.
 \item\label{bb7} $\neg (x \lor y) = \neg x \land \neg y$.
 \end{enumerate}
 \end{lemma}

Let $\A \in \bb{BRL}$. Recall that $x \in A$ is called
  \textit{dense} if $\neg\ x = \bot$. We shall denote by
  $D (\A)$ the set of dense elements of $\A$. In symbols,
  \bg{equation}\label{dense}D (\A) = \{x \in A : \neg\ x = \bot\} = \{x \in A :
  \neg\neg\ x = \top\}.\e{equation}
  It is well known, and easy to prove, that $D(\A)$ is a
  proper i-filter of $\A$. Hence it is the universe of a residuated lattice $\D(\A)$.

\subsection{Boolean elements}

 In general, an element of a bounded non-distributive lattice may have more than one complement. But for $\A\in \mathbb{BRL}$ and $a\in A$, if a complement of $a$ exists then it is unique and is given by $z = \neg a$ (see \cite{KO}). Complemented elements of $\A$ are called \emph{Booleans}. That is, $a \in A$ is Boolean if and only if $a \lor \neg a = \top$ and $a \land \neg a = \bot$. We denote by $B(\A)$ the set of all Boolean elements of $\A$.

 In the next lemma we collect some properties of Boolean elements that we shall use in the sequel.

\bg{lemma}\label{basicboolean} The following properties are true in any bounded residuated lattice $\A$:
 \be[$1)$]
 \item\label{bool0} If $a \in B(A)$, then $\neg a \in B(\A)$ and $\neg\neg a = a$.
 \item\label{bool1} An element $a \in A$ is Boolean if and only if $a \lor\ \neg a = \top$.
 \ee
 If $a, b \in B(\A)$, then for arbitrary elements  $x, y \in A$, one has:
\be[$a)$]
 \item\label{bool2}  $a \ast x = a \land x$,
 \item\label{bool3}  $a \to x = \neg a \lor x$,
 \item \label{boolx} $x = (x \land a) \lor (x \land \neg a)$,
 \item \label{boolxx} If $x \land y \geq \neg b$ and $b \land x = b \land y$, then $x = y$.
 \ee \e{lemma}
 \bg{proof} $\ref{bool0})$ follows from the fact that  $a$ is a complement of $\neg a$.  Since $\neg a$ is the only possible complement of $a$, we have the ``only if" part of $\ref{bool1})$.  To prove the ``if" part, note that by $\ref{bb7})$ in Lemma~\ref{basicbounded}, $a \lor\ \neg a = \top$ implies $\bot = \neg a \land \neg\neg a\ \geq \neg a \land a$. To prove $\ref{bool2})$, suppose first that $y \in A$ and $y \leq a$. Then, taking into account $(\ref{distprod})$ in Lemma~\ref{basic}, $$y = y \ast (a \lor \neg a) = (y \ast a) \lor (y \ast \neg a) = y \ast a.$$ Hence if $y \leq a$ and  $y \leq x$, then $y = y \ast a \leq x \ast a$. Since $x \ast a \leq x, a$, we  proved $\ref{bool2})$. For $\ref{bool3})$, note that by $(\ref{order})$, $(\ref{K})$ and $(\ref{impmonotder})$ in Lemma~\ref{basic}, $x \leq a \to x$ and $\neg a \leq a \to x$. Hence $\neg a \lor x \leq a \to x$. Similarly we have that $$\neg a \leq (a \to x) \to \neg a \leq (a \to x) \to (\neg a \lor x)$$ and taking into account $(\ref{intercambio})$ in Lemma~\ref{basic}, $$a \leq (a \to x) \to x \leq (a \to x) \to (\neg a \lor x).$$ Therefore $\top = a \lor \neg a \leq (a \to x) \to (\neg a \lor x),$ and this implies that we also have $a \to x \leq \neg a \lor x$,   completing the proof of $\ref{bool3})$.
To prove $\ref{boolx})$, note that $x = x \ast (a \lor \neg a) = (x \ast a) \lor (x \ast \neg a) = (x \land a) \lor (x \land \neg a)$.
To prove $\ref{boolxx})$, suppose that $x \land y \geq \neg b$ and $x \land b = y \land b$. Hence by item (\ref{boolx}) we have $x = (x \land b) \lor  \neg b = (y \land b) \lor \neg b = y$.
 \e{proof}

The following results can be found in \cite{KO}.

\bg{lemma}\label{boolsubalg} For each $\A \in \bb{BRL}$, $B(\A)$ is the universe of a subalgebra $\B(\A)$ of $\A$ which is a Boolean algebra.\e{lemma}

Recall that  an algebra $\A$ is called \textit{directly
 indecomposable} provided that  $A$\ has more than one element  and whenever
 $\A$ is isomorphic  to a direct product of two algebras $\A_1$\ and
 $\A_2$, then either  $\A_1$\   or $\A_2$\ is the trivial algebra
 with just one element.

 \bg{theorem}\label{indecomposable} $\A \in \bb{BRL}$ is directly indecomposable if and only if $\B(\A)$ is the two-element Boolean algebra.\e{theorem}

\section{Stonean residuated lattices}

\subsection{Definition and directly indecomposable Stonean residuated lattices}

\emph{Stonean residuated lattices} are bounded residuated lattices satisfying \bg{equation}\label{eq:stone} \neg x \lor \neg\neg x = \top.\e{equation}
 Let $\bb{SRL}$ be the variety and the category of Stonean residuated lattices and homomorphisms.
\medskip

\bg{remark}\label{nondist} The bounded lattice reduct $\mathbf{L}(\A)$ of an $\A \in \bb{SRL}$ is  a Stone algebra as defined in the lattice literature \cite{Gra78} if and only if $\mathbf{L}(\A)$ is distributive. As noted in \cite{CigEst},  Stonean residuated lattices do not need to satisfy any lattice equation, like distributivity or modularity.\e{remark}

The following theorem is proved in \cite[Theorem 1.7]{CigEst}:
\bg{theorem}\label{equivstonean} The following are equivalent
conditions for a bounded (integral, commutative) residuated lattice
$\A$: \bg{enumerate}[(i)]
\item $\A$ is Stonean,
\item $\A$ satisfies the equations $\neg x\wedge x=\bot
$ and $\neg\ (x \land y) = \neg\ x \lor \neg\
y,$
\item $B(\A) \supseteq \neg (A) := \{\neg\ x : x \in A\}$. 
\e{enumerate}\e{theorem}

Given  $\A \in \bb{RL}$ and an
element $o \not \in A$, a bounded residuated lattice can be defined with universe $S(\A) = \{o\} \cup A$ adjoining the element $o$ as bottom element of $S(\A)$ and respecting the existing operations on $\A$. Thus $x \ast o = o$ and $\to$ is given by:

$x \to y = \bg{cases} x \to y & \mathrm{if} \,\,\, x, y \in A,\\
o & \mathrm{if} \,\,\, x \in A \,\,\, \mathrm{and}\,\,\, y = o,\\
\top & \mathrm{if} \,\,\, x = o. \e{cases}$

\medskip

The proof of the next theorem consists of some straight computations
that are left to the reader (cf. \cite[pag. 69]{CigTor02}, \cite[Theorem
2.2]{CigTor06}).

\bg{theorem}\label{const} Let $\A \in \bb{RL}$ and $o \not \in A$. Then $$\mbf{S}(\A) = (S(\A), \wedge, \vee, \ast, \to, \top, o)$$ is a
Stonean residuated lattice, and $D(\mbf{S}(\A)) = A$. Moreover, each
homomorphism $h$ from $\A$ into a residuated lattice $\mbf C$ can be
extended to a homomorphism $\mbf{S}(h)$ from $\mbf{S}(\A)$ into
$\mbf{S}(\mbf{C})$ by the prescription $$\mbf{S}(h)(x) = \bg{cases}
h(x) & \mathrm{if}\,\,\, x \in A,\\ o_{\mbf{S}(\mathbf{C})} &
\mathrm{if} \,\,\, x = o_{\mbf{S}(\A)}.\e{cases}$$
\e{theorem}

From Theorems \ref{const} and \ref{indecomposable} we obtain:

\bg{lemma}\label{embedding} For every Stonean residuated lattice
$\A$, $\{\bot\} \cup D(\A)$ is the universe of a subalgebra of $\A$
which is isomorphic to  $\mbf{S}(\mbf{D}(\A))$.  $\A$ is isomorphic
to $\mbf{S}(\mbf{D}(\A))$ if and only if $\A$ is directly
indecomposable.\e{lemma}

It is easy to check that $\mbf S$ is a full and faithful functor from the category
$\mathbb{RL}$ of residuated lattices and homomorphisms to the full
subcategory $\mathbb{SRL}_i$  of the category of $\mathbb{SRL}$ and
bottom-preserving  homomorphisms whose objects are the directly indecomposable members of $\bb{SRL}$. Since each object in $\mathbb{SRL}_i$ is of the form $\mbf{S}(\D)$ for some $\D \in \bb{RL}$, by \cite[Proposition 1.3]{Jac} we have:
\bg{corollary}\label{cor:equivcat} The functor $\mbf{S}$ establishes an equivalence between the categories $\mathbb{RL}$ and $\mathbb{SRL}_i$.\e{corollary}

\bg{remark}\label{remark:squotient} {\rm It is easy to check that} for each $\A \in \bb{RL}$ and $F \in \mathcal{F}_i(\A)$,  $\mbf{S}(\A/F) \cong \mathbf{S}(\A)/F$.\e{remark}

We present a weaker form of distributivity that holds in  $\mathbb{SRL}$ that shall be needed later.

\begin{lemma}\label{dist_booleanos}
Let $\A\in \mathbb{SRL}.$ If $x,y,z\in A$, then
\begin{equation}\label{eq:dist_booleanos}
y\vee (\neg\neg z\wedge x)=(y\vee \neg\neg z)\wedge (y\vee x).
\end{equation}
\end{lemma}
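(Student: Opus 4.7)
The key observation is that, since $\A$ is Stonean, Theorem~\ref{equivstonean} gives $B(\A) \supseteq \neg(A)$; applied to $\neg z$ this shows that $b := \neg\neg z$ is Boolean. The claim therefore reduces to proving that whenever $b$ is a Boolean element of $\A$ and $x,y$ are arbitrary, $y \vee (b \wedge x) = (y \vee b) \wedge (y \vee x)$. The inequality $\leq$ holds in any lattice, so only the opposite inequality requires work.

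To prove the nontrivial inequality, set $w := (y \vee b) \wedge (y \vee x)$ and apply Lemma~\ref{basicboolean}(\ref{boolx}) to obtain $w = (w \wedge b) \vee (w \wedge \neg b)$. The plan is to bound each summand separately by $y \vee (b \wedge x)$. For the first, $w \leq y \vee x$ yields $w \wedge b \leq b \wedge (y \vee x)$; by Lemma~\ref{basicboolean}(\ref{bool2}) this equals $b \ast (y \vee x)$, and by the distributivity of $\ast$ over $\vee$ in Lemma~\ref{basic}(\ref{distprod}) it further equals $(b \wedge y) \vee (b \wedge x) \leq y \vee (b \wedge x)$. For the second summand, $w \leq y \vee b$ yields $w \wedge \neg b \leq \neg b \wedge (y \vee b) = (\neg b \wedge y) \vee (\neg b \wedge b) = \neg b \wedge y \leq y$, again using that $\ast$ distributes over $\vee$ and coincides with $\wedge$ on Boolean arguments. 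Taking the join of the two bounds gives $w \leq y \vee (b \wedge x)$, as required.

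The main conceptual obstacle is that the lattice reduct $\mathbf{L}(\A)$ need not be distributive (Remark~\ref{nondist}), so the ordinary distributive law is not at our disposal. The substitute is supplied by the monoid operation: $\ast$ distributes over $\vee$ unconditionally and, by Lemma~\ref{basicboolean}(\ref{bool2}), reduces to $\wedge$ whenever one of the factors is Boolean. This is precisely the controlled form of distributivity needed to push the Boolean element $\neg\neg z$ past the meet in $\neg\neg z \wedge x$, and it is why the identity must be stated for meetands of the form $\neg\neg z$ rather than for arbitrary elements of $A$.
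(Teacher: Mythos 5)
Your proof is correct, but it takes a genuinely different route from the paper's. The paper argues by a universal-algebraic reduction: since an equation holds in a variety as soon as it holds in all directly indecomposable members (subdirectly irreducible algebras being directly indecomposable), and since $\B(\A)$ is the two-element Boolean algebra when $\A$ is directly indecomposable (Theorem~\ref{indecomposable}), it suffices to check the two trivial cases $\neg\neg z=\bot$ and $\neg\neg z=\top$, where both sides collapse to $y$ and to $y\vee x$ respectively. You instead compute inside an arbitrary algebra: after using Theorem~\ref{equivstonean} to see that $b=\neg\neg z$ is Boolean, you note the inequality $\leq$ is automatic, decompose $w=(y\vee b)\wedge(y\vee x)$ as $(w\wedge b)\vee(w\wedge\neg b)$ via Lemma~\ref{basicboolean}~(\ref{boolx}), and bound each summand using that $\ast$ coincides with $\wedge$ when one factor is Boolean (Lemma~\ref{basicboolean}~(\ref{bool2})) and that $\ast$ distributes over $\vee$ (Lemma~\ref{basic}~(\ref{distprod})); every step checks out, including $\neg b\wedge(y\vee b)=(\neg b\wedge y)\vee(\neg b\wedge b)=\neg b\wedge y$. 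The paper's argument buys brevity at the price of invoking the reduction to directly indecomposable algebras; your argument buys a self-contained, purely equational proof that in fact establishes a stronger statement: the identity $y\vee(b\wedge x)=(y\vee b)\wedge(y\vee x)$ holds for \emph{every} Boolean element $b$ of an \emph{arbitrary} bounded residuated lattice, the Stonean hypothesis entering only to guarantee that $\neg\neg z\in B(\A)$.
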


\begin{proof} It is enough to check the equation in directly indecomposable algebras. So if ${\bf A}$ is directly indecomposable, $\neg\neg z$ is boolean, therefore $\neg\neg z\in \{\bot, \top\}.$ If $\neg\neg z=\bot$ the lefthandside of (\ref{eq:dist_booleanos}) is $y\vee (\bot\wedge x)=y$ and the righthandside is $(y\vee \bot)\wedge (y\vee x)= y\wedge (y\vee x)=y.$ If $\neg\neg z=\top$ the lefthandside of (\ref{eq:dist_booleanos}) is $y\vee (\top\wedge x)=y\vee x$ and the righthandside is $(y\vee \top)\wedge (y\vee x)= y\vee x.$ Thus the equation holds in the whole variety $\mathbb{SRL}.$
\end{proof}

\subsection{Construction of Stonean residuated lattices as Sheaves}

By a \textit{Boolean space} we mean a totally disconnected compact Hausdorff space, and for each Boolean space $X$, $\C(X)$ will denote the Boolean algebra formed by the clopen subsets of $X$. Moreover, for each $x \in X$, we denote by  $C(x)$ the set of clopen neighborhoods of $x$, i.e.,  $$C(x) = \{a \in C(X) : x \in a\}.$$

Through this Section $\A$ is a fixed non trivial algebra in $\bb{RL}$,  $X$ is a Boolean space and  \fun{\varphi}{\C(X)}{\cc{F}_i(\A)}\ is  a dual homomorphism, i.~e., for $a, b \in C(X)$, $\varphi (a \cap b) = \varphi (a) \lor \varphi (b)$, $\varphi (a \cup b) = \varphi (a) \cap \varphi (b)$,  $\varphi (\emptyset) = A$ and  $\varphi (X) = \{\top\}$.
For $a, b \in C(X)$ such that $a \subseteq b$, let $\rho_{ab}$ be the natural homomorphism from $\A/\varphi (b)$ onto $\A/\varphi (a)$. Then it is easy to check that the system
\bg{equation}
\label{eq:presheaf}\langle\{\A/\varphi(a)\}_{a \in \C(X)}, \{\rho_{ab}\}_{a \subseteq b}\rangle \e{equation} is a presheaf of algebras in $\bb{RL}$.
Since for $a, b \in C(X)$, $\varphi(a) \lor \varphi (b) = \varphi (a \cap b)$, we have that for each $x \in X$,  $$F_x = \bigvee_{a \in  C(x)} \varphi (a)$$  is an i-filter of $\A$.

If for each $a \in C(x)$,  $\rho_{x,a}$ denotes the natural homomorphism from $\A/\varphi(a)$ onto$\A/ F_x$, then the following diagram is commutative for $x \in X$, $a \in C(x)$ and $a \subseteq b$:
\bg{equation}\label{triang}\bg{array}{ccc}
A/\varphi(b) & \stackrel{\rho_{ab}}\longrightarrow & A/\varphi(a) \\
\rho_{x,b} \searrow   & &
\swarrow \, \rho_{x,a}  \\
& A/F_x  & \e{array}\e{equation}

\begin{lemma}
 $A/F_x$
is  the inductive limit of the system $$\langle \{A/\varphi(a)\}_{a \in C(x)}, \{\rho_{a b}\}_{a\subseteq b}\rangle.$$
Moreover, for each $a \in C(X)$ we have:
\bg{equation}\label{eq:intersectionFxina} \bigcap_{x \in a}F_x = \varphi(a).\e{equation}
\end{lemma}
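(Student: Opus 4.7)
The plan is to prove the two assertions separately. The first one essentially reduces to a general fact about quotients by directed joins of congruences, while the second is a compactness argument built on top of the first.

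For the inductive limit claim, I begin by observing that the family of i-filters $\{\varphi(a) : a \in C(x)\}$ is upward directed in $\mathcal{F}_i(\A)$: since $C(x)$ is closed under binary intersection and $\varphi(a \cap b) = \varphi(a) \vee \varphi(b)$, any two members of the family are dominated by a common third member. Consequently $F_x$ is a directed union, $F_x = \bigcup_{a \in C(x)} \varphi(a)$, and equivalently the congruences $\theta(\varphi(a))$ form a directed family with union $\theta(F_x)$. To verify the universal property, I would take an arbitrary cocone $(B, \{g_a\}_{a \in C(x)})$ satisfying $g_a \circ \rho_{ab} = g_b$ whenever $a \subseteq b$, and show that the composites $g_a \circ \pi_a \colon A \to B$ (where $\pi_a \colon A \to A/\varphi(a)$ is the quotient) all coincide with a single homomorphism $f \colon A \to B$, by using directedness to reduce any two indices $a, b$ to the common refinement $a \cap b$. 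Since $\varphi(a) \subseteq \ker f$ for every $a$, we get $F_x \subseteq \ker f$, so $f$ factors uniquely through $A/F_x$, yielding the required mediating morphism; its compatibility with the $\rho_{x,a}$ follows from surjectivity of $\pi_a$ combined with the commuting triangle~\eqref{triang}.

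For identity \eqref{eq:intersectionFxina}, the inclusion $\varphi(a) \subseteq \bigcap_{x \in a} F_x$ is immediate, since $x \in a$ forces $a \in C(x)$ and hence $\varphi(a) \subseteq F_x$. For the reverse inclusion take $y \in \bigcap_{x \in a} F_x$. By the directed-union description of $F_x$, for each $x \in a$ there exists $b_x \in C(x)$ with $y \in \varphi(b_x)$. The family $\{b_x : x \in a\}$ is a clopen cover of $a$ in $X$, and since $a$ is clopen in the Boolean space $X$, it is itself compact, so finitely many elements $b_{x_1}, \ldots, b_{x_n}$ already cover $a$. Because $\varphi$ is order-reversing and sends finite unions to intersections,
\begin{equation*}
\varphi(a) \;\supseteq\; \varphi\bigl(b_{x_1} \cup \cdots \cup b_{x_n}\bigr) \;=\; \bigcap_{i=1}^{n} \varphi(b_{x_i}),
\end{equation*}
and the right-hand side contains $y$, so $y \in \varphi(a)$.

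I do not expect any serious obstacle. The only substantive ingredient is the finite-subcover step in the second part; the first part is routine once one notices the directedness of $C(x)$ and translates it into directedness of the associated filter/congruence family. The trickiest bookkeeping is simply keeping track of the fact that $\varphi$ reverses order (so smaller neighborhoods give larger congruences and stronger quotients), and that in a variety the inductive limit of quotients indexed by a directed family of congruences is the quotient by the union.
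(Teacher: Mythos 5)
Your proposal is correct and follows essentially the same route as the paper: the inductive limit claim is reduced to the fact that $F_x$ is the directed union of the $\varphi(a)$ (which the paper dismisses as an ``immediate consequence of the definition'' and you verify in full via the universal property), and identity \eqref{eq:intersectionFxina} is proved by exactly the same finite-subcover compactness argument combined with $\varphi$ turning finite unions into intersections. Your handling of the cover (using that $\varphi$ is order-reversing rather than asserting the cover equals $a$) is slightly more careful bookkeeping than the paper's, but it is the same proof.
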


\noindent\bg{proof} That $A/F_x$
is  the inductive limit of the system $$\langle \{A/\varphi(a)\}_{a \in C(x)}, \{\rho_{a b}\}_{a\subseteq b}\rangle$$ is an immediate consequence of the definition of $F_x.$
Now assume that $d \in \bigcap_{x \in a} F_x$. Then for each $x \in X$ there is an $a_x \in C(x)$ such that $d \in \varphi (a_x)$. By compactness, there are $x_1, \dots, x_n \in a$ such that $a = a_{x_{1}} \cup \dots \cup a_{x_{n}}$. Hence $$d \in \bigcap_{i = 1}^n \varphi (a_{x_{i}}) = \varphi (a_{x_{1}} \cup \dots \cup a_{x_{n}}) = \varphi (a).$$

\e{proof}

As a particular case we get that
\bg{equation}\label{eq:intersectionFx} \bigcap_{x \in X}F_x = \{\top\}.\e{equation}
Note that by (\ref{eq:intersectionFx}), $\A$ is a subdirect product of the family $\{A/F_x\}_{x \in X}$.
From Corollary~\ref{cor:equivcat} we get:
\begin{theorem}\label{teo:presheafofSRL} The system
$$\langle\{\Sa(\A/\varphi(a))\}_{a \in \C(X)}, \{\Sa(\rho_{ab})\}_{a \subseteq b}\rangle$$ is a presheaf of directly indecomposable Stonean residuated lattices  and for each $x \in X$, $\Sa(A/F_x)$
is  the inductive limit of the system $$\langle \{\Sa(\A/\varphi(a))\}_{a \in C(x)}, \{\Sa(\rho_{a b})\}_{a\subseteq b}\rangle.$$
\end{theorem}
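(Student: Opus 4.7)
The plan is to derive both assertions as direct consequences of Corollary \ref{cor:equivcat}, which exhibits $\mbf{S}$ as an equivalence between $\mathbb{RL}$ and the full subcategory $\mathbb{SRL}_i$ of directly indecomposable Stonean residuated lattices. Two features of $\mbf{S}$ will do all the work: it is a functor, so it preserves identities and composition; and as an equivalence, it preserves all colimits. The preceding lemma has already done the analogous work in $\mathbb{RL}$, so the task is essentially to transport those conclusions across $\mbf{S}$.

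For the presheaf assertion I would apply $\mbf{S}$ term by term to the presheaf $\langle\{\A/\varphi(a)\}_{a\in\C(X)},\{\rho_{ab}\}_{a\subseteq b}\rangle$ displayed in \eqref{eq:presheaf}. Functoriality yields $\mbf{S}(\rho_{aa})=\mathrm{id}_{\mbf{S}(\A/\varphi(a))}$ and $\mbf{S}(\rho_{ab})\circ\mbf{S}(\rho_{bc})=\mbf{S}(\rho_{ab}\circ\rho_{bc})=\mbf{S}(\rho_{ac})$ for $a\subseteq b\subseteq c$ in $\C(X)$, which are exactly the presheaf axioms. Each fibre $\mbf{S}(\A/\varphi(a))$ is directly indecomposable by construction, since the codomain of $\mbf{S}$ is $\mathbb{SRL}_i$.

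For the inductive limit assertion, I would start from the fact, recorded in the preceding lemma, that $\A/F_x$ is the inductive limit in $\mathbb{RL}$ of $\langle\{\A/\varphi(a)\}_{a\in C(x)},\{\rho_{ab}\}_{a\subseteq b}\rangle$, with universal cocone given by the maps $\rho_{x,a}$ appearing in the commutative triangle \eqref{triang}. Applying the equivalence $\mbf{S}$ then makes $\mbf{S}(\A/F_x)$ with the cocone $\{\mbf{S}(\rho_{x,a})\}_{a\in C(x)}$ the inductive limit of the image system in $\mathbb{SRL}_i$.

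The main point requiring attention is to justify that the colimit really transports along $\mbf{S}$. I would prefer to verify the universal property directly rather than cite an abstract theorem: given any cocone $\{g_a\colon\mbf{S}(\A/\varphi(a))\to\mbf{B}\}_{a\in C(x)}$ in $\mathbb{SRL}_i$, fullness of $\mbf{S}$ writes each $g_a=\mbf{S}(f_a)$ for a unique $f_a$ (uniqueness by faithfulness), compatibility of the $g_a$'s transfers to compatibility of the $f_a$'s, and the universal property of $\A/F_x$ in $\mathbb{RL}$ yields a unique mediating morphism whose image under $\mbf{S}$ is the required factorisation through $\mbf{S}(\A/F_x)$. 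This is the only step that is not purely formal, but it is routine once faithfulness and fullness of $\mbf{S}$ are in hand.
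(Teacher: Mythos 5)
Your proposal is correct and takes exactly the paper's route: the paper states this theorem as an immediate consequence of Corollary~\ref{cor:equivcat} together with the preceding lemma, offering no further argument. Your explicit verification that the inductive limit transports along the equivalence $\mbf{S}$ (via fullness and faithfulness, plus the fact that every object of $\mathbb{SRL}_i$ is of the form $\mbf{S}(\D)$) simply spells out what the paper leaves implicit.
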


\medskip

Now let $$\sh{S} = \bigcup_{x \in X} (\{x\} \times \Sa(A/F_x)),$$  and for each $d \in A$, define $$\hat{d}:{X}\to {\sh{S}}$$ by the prescription $\hat{d} (x) = \langle x, d/F_x\rangle$ for each $x \in X$. It follows from (\ref{eq:intersectionFx}) that the correspondence $d \mapsto \hat{d}$ is injective. Moreover, for each $a \in \C(X)$, let $$\hat{a}:{X}\to{\sh{S}}$$ be defined, for each $x \in X$, by the prescription:
\bg{equation} \hat{a}(x) = \bg{cases}
\langle x, o_{\Sa(\A/F_{x})}\rangle & \mathrm{if}\,\,\, x \in a,\\ \langle x,\top \rangle &
\mathrm{if} \,\,\, x \in X \setminus a.\e{cases}\e{equation}
 Observe that for each $x \in a$ we have:
\bg{equation}\label{eq:restriction} \hat{d} (x) = \langle x, \Sa(\rho_{x, a})(d/\varphi_a)\rangle, \,\,\, \hat{a}(x) = \langle x, \Sa(\rho_{x, a})(o_{\Sa(\A/\varphi(a))})\rangle. \e{equation}
Consequently, equipping  $\sh S$ with the topology having as basis the sets $$\hat{d}[a] = \{\hat{d}(x) : x \in a\}\,\,\, \mbox{and} \,\,\, \hat{a}[a] = \{ \hat{a}(x) : x \in a\}$$ for all $a \in C(X)$ and $d \in A$, and defining \fun{\pi}{\sh{S}}{X}\ by the prescription $\pi (\langle x, s\rangle) = x$ for all $\langle x, s\rangle \in \sh{S}$, we have that:

 \begin{theorem}\label{teo:algebra_of_global_sections} (see \cite{Bre,Dav,Kno}) $\langle \sh{S}, \pi, X \rangle$ is the sheaf of directly indecomposable Stonean residuated lattices associated with the presheaf $$\langle\{\Sa(\A/\varphi(a))\}_{a \in \C(X)}, \{\Sa(\rho_{ab})\}_{a \subseteq b}\rangle.$$  The  continuous global sections of $\langle \sh{S}, \pi, X \rangle$, with the operations defined pointwise, form a Stonean residuated lattice that we shall denote by $\cc{A}(\langle \C, \A, \varphi\rangle)$.\end{theorem}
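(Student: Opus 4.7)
The plan is to establish the theorem in three stages: first, to identify the explicitly topologized triple $\langle\sh S,\pi,X\rangle$ as the étale space associated with the given presheaf; second, to verify that the stalkwise operations are continuous, so that we indeed have a sheaf of algebras in $\bb{SRL}_i$; and third, to deduce that continuous global sections with pointwise operations form a Stonean residuated lattice.

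For the first step, the standard construction of an étale space from a presheaf on a topological space (available from \cite{Bre,Dav,Kno}) produces a space whose stalks are the inductive limits and whose topology has as basis the images of local sections. Here, by Theorem~\ref{teo:presheafofSRL}, the stalks are $\Sa(\A/F_x)$. To match the proposed basis of sets $\hat d[a]$ and $\hat a[a]$ with the étale topology, I would check that each $\hat d$ and each $\hat a$ is a local section of $\pi$, that these images cover $\sh S$ (using that $\Sa(\A/F_x)=\{o\}\cup (A/F_x)$ by construction of $\Sa$), and that agreement of two sections at a point propagates to a clopen neighborhood; this last property is exactly what is provided by equation \eqref{eq:intersectionFxina}, since $d/F_x = e/F_x$ means $d\leftrightarrow e\in F_x$, hence $d\leftrightarrow e\in \varphi(a)$ for some $a\in C(x)$.

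For the second step, $\pi$ is a local homeomorphism because on each basic open $\hat d[a]$ (respectively $\hat a[a]$) it is a bijection onto the clopen $a$ with continuous inverse $\hat d$ (respectively $\hat a$). To show continuity of the algebraic operations I would argue by cases using the rules of Theorem~\ref{const}: two sections of type $\hat{d_1},\hat{d_2}$ combine pointwise into the section $\widehat{d_1\ast d_2}$ (and similarly for $\lor$, $\land$, $\to$), while if an argument is of type $\hat a$, representing the adjoined bottom $o$ on $a$, the product or implication collapses according to the $\Sa$-construction into another section of basic form. In every case one exhibits a basic-open neighborhood of the output in the image. I expect this case analysis to be the main obstacle, since the two families of basic opens interact non-trivially and the implication $\to$ has the most sub-cases, but the verification is mechanical once one unpacks Theorem~\ref{const}.

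For the third step, the set of continuous global sections is closed under pointwise operations by continuity of the operations on $\sh S$ just established. The resulting algebra has the constant $\top$ section as top and $\hat X$ as bottom, and each stalk $\Sa(\A/F_x)$ is directly indecomposable Stonean by Lemma~\ref{embedding}. Since any equation satisfied in every stalk is also satisfied pointwise, every defining identity of $\bb{SRL}$—in particular the Stoneness identity $\neg x\vee \neg\neg x=\top$—holds in the algebra of continuous global sections. Hence $\cc A(\langle\C,\A,\varphi\rangle)\in\bb{SRL}$, completing the proof.
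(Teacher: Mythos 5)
The paper offers no proof of this theorem at all: it is stated as an instance of the general theory of sheaves of algebras over Boolean spaces and delegated to the cited references \cite{Bre,Dav,Kno}, with all the ingredients (the presheaf, the stalks as inductive limits, the two families of basic sections) already set up in the surrounding text. Your outline is precisely the standard argument those references contain---identification of $\langle\sh S,\pi,X\rangle$ as the \'etale space of the presheaf, verification that $\pi$ is a local homeomorphism and that the stalkwise operations are continuous, and membership of the algebra of continuous global sections in $\bb{SRL}$ because the defining identities hold in every stalk---so it is correct in outline and is in substance the same approach the paper relies on by citation. One small referencing slip: the propagation of germ agreement to a clopen neighborhood comes from the fact that $F_x$ is the \emph{directed} union $\bigcup_{a\in C(x)}\varphi(a)$ (directedness being exactly $\varphi(a)\vee\varphi(b)=\varphi(a\cap b)$), not from \eqref{eq:intersectionFxina} itself, which instead supplies the inclusion $\varphi(a)\subseteq F_y$ for $y\in a$ used in the final step of that argument.
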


 \medskip


 The algebra $\cc{A}(\langle \C, \A, \varphi\rangle)$ will be simply called $\cc A$ when there is no danger of confusion.
The bottom element of $\cc A$ is the global section $O$ defined by $O(x) =  o_ {\Sa(\A/F_{x})}$ for all $x \in X$.
Clearly, the correspondence $a \mapsto \hat{a}$ is a Boolean algebra  isomorphism from $\C (X)$ onto $\B(\cc{A})$. Note that $\hat {X} = O$.

Observe that a section $f$ is dense in $\cc A$ if and only if $f(x) > O(x)$ for all $x \in X$. Hence, taking into account (\ref{eq:intersectionFx}), we have that the mapping $d \mapsto \hat{d}$ is an embedding of $\A$ into $\D (\cc {A})$.

\bg{lemma}\label{onto} Let $Y$ be a closed subset of $X$. If $f$ is a continuous section such that $f(y) > O(y)$ for all $y \in Y$, then there is $d \in A$ such that $f(y) = \hat{d}(y)$ for all $y \in Y$.\e{lemma}
\bg{proof} For each $y \in Y$ there is $d_y \in A$ such that $f(y) = \hat{d_y}(y)$. Since $f, \hat{d_y}$ are continuous sections, there is $a_y \in C(X)$ such that $y \in a_y$ and $f(x) = \hat{d_y}(x)$ for all $x \in a_y$. By compactness, the are  $a_1, \dots, a_n$ in $C(X)$ and $d_1, \ldots, d_n$ in $A$ such that $Y \subseteq a_1 \cup \cdots \cup a_n$ and $f(x) = \hat{d_i}(x)$ for $1 \leq i \leq n$. Since $\hat{d_i}(x) = \hat{d_j}(x)$ for all $x \in a_i \cap a_j$, then   $d_i/F_x = d_j/F_x$ for $x \in a_i \cap a_j$, $1 \leq i, j \leq n$. By (\ref{eq:intersectionFxina}) $d_i \leftrightarrow d_j \in \varphi (a_i \cap a_j) = \varphi (a_i) \lor \varphi (a_j)$. Hence by  Theorem~\ref{teo:crt} we have that there is $d \in A$ such that $\hat{d} (x) = \hat{d_i} (x)$ for all $x \in a_i$, and this implies that $f(y) = \hat{d}(y)$ for all $y \in Y$.

\e{proof}

\begin{lemma} The residuated lattices $\A$ and $\D(\cc{A})$ are isomorphic.
\end{lemma}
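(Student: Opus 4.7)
The plan is to observe that essentially all the work has already been done, and the proof is a short wrap-up of the preceding lemmas. Let me map out the two halves.

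First, for the embedding direction: it was already noted right before Lemma~\ref{onto} that $d\mapsto \hat d$ is an embedding of $\A$ into $\D(\cc A)$. Concretely, this map lands in $\D(\cc A)$ because $d\in A\subseteq A/F_x$ has nontrivial image strictly above $o_{\Sa(\A/F_x)}$ pointwise, injectivity comes from (\ref{eq:intersectionFx}) (if $\hat d=\hat e$ then $d\leftrightarrow e\in F_x$ for every $x$, hence $d=e$), and the operations of $\cc A$ are defined pointwise using the $\mbf S(\A/F_x)$ structure, so that preservation of $\ast,\to,\vee,\wedge,\top$ follows immediately from the fact that $d\mapsto d/F_x$ is a homomorphism in $\bb{RL}$.

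Second, for surjectivity: let $f\in\D(\cc A)$. By the characterization of dense sections recalled just above Lemma~\ref{onto}, $f$ is a continuous global section satisfying $f(x)>O(x)$ for every $x\in X$. Now take $Y:=X$, which is trivially closed in $X$. Lemma~\ref{onto} then produces an element $d\in A$ such that $f(y)=\hat d(y)$ for all $y\in Y=X$, that is, $f=\hat d$. Hence $d\mapsto \hat d$ is surjective onto $\D(\cc A)$.

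Combining the two parts, $d\mapsto \hat d$ is a residuated lattice isomorphism from $\A$ onto $\D(\cc A)$. There is no real obstacle here: the technical content (preservation, injectivity, extending local sections to a global element via the Chinese Remainder Theorem) is concentrated in the previous lemmas, and the statement is really the payoff of Lemma~\ref{onto} applied with $Y=X$ together with the observation that a global section is dense in $\cc A$ precisely when it is pointwise above~$O$.
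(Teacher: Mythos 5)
Your proof is correct and follows exactly the paper's own argument: the embedding $d \mapsto \hat d$ into $\D(\cc A)$ was already established before Lemma~\ref{onto}, and surjectivity follows by applying Lemma~\ref{onto} with $Y = X$. You simply spell out the details the paper leaves implicit.
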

\bg{proof}
We already noted that the mapping $d \mapsto \hat{d}$ is an injective homomorphism from $\A$ into $\D(\cc{A})$. Taking $Y = X$ in the above lemma, we see that it is also surjective. Hence it is a residuated lattice isomorphism from $\A$ onto $\D(\cc {A})$.
\e{proof}
\medskip

For each $a \in C(X)$, let $\mathrm{Sec}(a)$ denote the Stonean residuated lattice of restrictions of $\cc A$ to $a$.

\begin{lemma}
The mapping $d/\varphi(a) \mapsto \hat{d}\!\!\restriction_{a}$  is an isomorphism from $\A/\varphi(a)$ onto $\D(\mathrm{Sec}(a))$.
\end{lemma}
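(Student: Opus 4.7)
The plan is to verify in sequence that the assignment $\Phi: d/\varphi(a) \mapsto \hat{d}\!\!\restriction_{a}$ is well-defined, a residuated lattice homomorphism, lands in $\D(\mathrm{Sec}(a))$, and is a bijection onto it. The technical core will be the surjectivity, which will reduce to Lemma~\ref{onto} applied to the closed (in fact clopen) subspace $a \subseteq X$.

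For well-definedness and injectivity, I would exploit equation (\ref{eq:intersectionFxina}), $\varphi(a)=\bigcap_{x\in a}F_x$. Indeed, $d/\varphi(a)=d'/\varphi(a)$ iff $d\leftrightarrow d'\in\varphi(a)$, iff $d\leftrightarrow d'\in F_x$ for every $x\in a$, iff $d/F_x=d'/F_x$ for every $x\in a$, iff $\hat d(x)=\hat{d'}(x)$ for every $x\in a$, iff $\hat d\!\!\restriction_a=\hat{d'}\!\!\restriction_a$. This single chain of equivalences simultaneously yields that $\Phi$ is well defined and injective.

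For the homomorphism property, recall from the previous lemma that $d\mapsto\hat d$ is already a residuated lattice homomorphism from $\A$ into $\D(\cc A)$ (operations on sections are defined pointwise in the stalks $\Sa(\A/F_x)$, which are homomorphic images of $\A$). Restriction to $a$ is a homomorphism $\cc A\to\mathrm{Sec}(a)$ by definition, so $d\mapsto\hat d\!\!\restriction_a$ is a homomorphism $\A\to\mathrm{Sec}(a)$. Combining this with well-definedness, $\Phi$ is a homomorphism from $\A/\varphi(a)$. Moreover, $\hat d(x)=\langle x,d/F_x\rangle$ belongs to the dense part of the stalk $\Sa(\A/F_x)$ for every $x$, since $d\in A$ and the embedding of $\A/F_x$ into $\Sa(\A/F_x)$ lands in the dense elements; hence $\hat d\!\!\restriction_a(x)>O(x)$ for every $x\in a$, so that $\Phi$ takes values in $\D(\mathrm{Sec}(a))$.

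The main (and only substantive) step is surjectivity. A generic element of $\D(\mathrm{Sec}(a))$ is the restriction $f\!\!\restriction_a$ of a continuous global section $f$ with $f(x)>O(x)$ for every $x\in a$. Since $a$ is clopen in $X$, it is closed, so Lemma~\ref{onto} applied to $Y=a$ produces an element $d\in A$ with $f(y)=\hat d(y)$ for all $y\in a$; that is, $f\!\!\restriction_a=\hat d\!\!\restriction_a=\Phi(d/\varphi(a))$. I expect this invocation of Lemma~\ref{onto} (which rests on compactness and the Chinese Remainder Theorem~\ref{teo:crt}) to be the crux: the lemma is precisely what converts local realizability by elements of $A$ into global realizability on $a$.
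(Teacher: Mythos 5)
Your proposal is correct and follows essentially the same route as the paper's proof: well-definedness and injectivity are extracted from equation (\ref{eq:intersectionFxina}), and surjectivity comes from Lemma~\ref{onto} applied to the closed (clopen) set $Y=a$. The details you add (the explicit chain of equivalences, the homomorphism property via pointwise operations, and the check that the image consists of dense sections) are exactly the steps the paper leaves implicit.
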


\begin{proof}
It follows from (\ref{eq:intersectionFxina}) that if $d_1/\varphi(a) = d_2/\varphi(a)$, then $\hat{d_1}(x) = \hat{d_2}(x)$ for all $x \in a$. Hence the mapping $d/\varphi(a) \mapsto \hat{d}\!\!\restriction_{a}$ is an injective homomorphism  from $\A/\varphi(a)$ into $\D(\mathrm{Sec}(a))$, and it follows from Lemma~\ref{onto} that this homomorphism is also surjective.
\end{proof}


\section{The triple associated with a Stonean residuated lattice}

\subsection{Representation of elements and triples}

Let $\A \in \bb{BRL}$. A \textit{Boolean retraction} is a homomorphism \fun{h}{\A}{\B(\A)}\ such that $h(h(x)) = h(x)$ for all $x \in A$. It is well known that $\A \in \bb{SRL}$ if and only if the double negation $\dn$ is a Boolean retraction. The next lemma shows that it is the only possible Boolean retraction $h$ satisfying $x \leq h(x)$ for all $x \in A$.

\bg{lemma} Let $\A \in \bb{BRL}$ and let \fun{h}{\A}{\B(\A)}\ be a Boolean retraction. If $x \leq h(x)$ for all $x \in A$, then $\dn x = h(x)$ for all $x \in A$.\e{lemma}
\bg{proof} Since $x \leq h(x)$, one has that $\top = \neg h(x) \lor h(x) \leq \neg x \lor h(x)$. On the other hand $\neg x \land h(x) \leq h(\neg x \land h(x)) = \neg h(x) \land h(x) = \bot.$
Therefore $\neg x = \neg h(x)$ and $\dn x = \dn h(x) = h(x)$. 
\e{proof}

\bg{lemma}\label{stonedecomposition} Let $\A \in \bb{SRL}$. Then for every $x \in A$:
\bg{equation}\label{eq:stonedecomposition} x = \neg\neg x \ast (\neg\neg x \to x).\e{equation}\e{lemma}
\bg{proof} Since $\neg x \in B(\A)$ for each $x \in A$,  by Lemma~\ref{basicboolean} we have:
$$\neg\neg x \ast (\neg\neg x \to x) = \neg\neg x \land (\neg x \lor x) =  \neg\neg x \land x = x.$$\e{proof}

Since $\neg\neg (\neg\neg x \to x) = \neg \neg (\neg x \lor x) = \neg (\neg\neg x \land \neg x) = \top,$
 (\ref{eq:stonedecomposition}) says that each element $x$ of $\A  \in \bb{SRL}$ can be written as \bg{equation}\label{eq:stonedecomposition2}x = b \ast d = b \land d,\e{equation} where $b \in B(\A)$ and $d \in D (\A), d \geq \neg x$. Hence $\A = B(\A) \ast D(\A)$.

 Observe that $b$ in (\ref{eq:stonedecomposition2}) has to be $\neg\neg x$. Indeed,
 \begin{equation}\label{doblenegacion}
 \neg\neg x = \neg\neg(b \ast d)= \neg (b \to \neg d) = \neg\neg b = b.
 \end{equation}
 The dense element $d$ is not uniquely determined by $x$, as the following trivial example shows: suppose that there is $\top > d \in D(\A)$, then $\bot = \bot \ast d = \bot \ast \top$. But it follows from (\ref{boolxx}) in Lemma~\ref{basicboolean} that \textit{$\neg\neg x \to x$ is the only dense element $d$ satisfying (\ref{eq:stonedecomposition}) such that $d \geq \neg x$}.

 \bg{lemma}\label{decompimstone} Let $\A \in \bb{BRL}$. If each $x \in A$ can be written as $x = b \ast d$, with $b \in B(\A)$ and $d \in D(\A)$,  then $\A \in \bb{SRL}$. \e{lemma}
 \bg{proof} For each $x \in A$ one has $\neg x = \neg (b \ast d) = b \to \neg d = \neg b$. Hence $\neg x \in B(\A)$ for all $x \in A$, and the result follows from Theorem~\ref{equivstonean}.
 \e{proof}

Let $\A \in \bb{SRL}$. For each $a \in B(\A)$ let \bg{equation}\label{eq:fi}\phi_{\A} (a) = \{x \in D(\A) : x \geq \neg a\} = [\neg a) \cap D(\A).\e{equation}
It is easy to check that the correspondence $a \mapsto \phi_{\A} (a)$ defines a lattice homomorphism from $\B(\A)$ into the lattice $\mathcal{F}_i(\D(\A))$ of lattice filters of $\D(\A)$.

\bg{defin}\label{triple} The triple associated with $\A \in \bb{SRL}$ is $\langle \B(\A), \D(\A), \phi_{\A}\rangle$.\e{defin}


From Lemma \ref{stonedecomposition} and the comments after it we can assert that the correspondence \bg{equation}\label{eq:psi}x \mapsto  (\dn x, \dn x \to x)\e{equation} defines a bijective mapping  from $A$ onto the set $$P(\A) = \{(a,d) \in B(\A) \times D(\A) : d \in \phi_{\A}(a)\}.$$

\begin{example}

We will provide an example of how non-isomorphic Stonean residuated lattices may have the same algebras of Boolean and dense elements. The idea is borrowed from \cite{MontUgo}, but adapted to our notation.

Let ${\mbf B}$ be the two-elements Boolean algebra and ${\mbf C}$ an arbitrary residuated lattice. Consider the algebras $${\mbf A}_1= (\mbf{S}({\mbf C}))^\omega$$ and $${\mbf A}_2= \mbf{S}({\mbf C}^\omega)\times {\mbf B}^\omega.$$

Clearly ${\mbf B}({\mbf A}_1)\cong {\mbf B}({\mbf A}_2)\cong {\mbf B}^\omega$ and ${\mbf D}({\mbf A}_1)\cong {\mbf D}({\mbf A}_2)\cong {\mbf C}^\omega$. Consider the element $b\in{\mbf B}^\omega$ defined by $b_1=\bot$ and $b_n=\top$ for all $n>1.$

If $ \langle{\mbf B}({\mbf A}_1), {\mbf D}({\mbf A}_1), \phi_1\rangle$ and $\langle {\mbf B}({\mbf A}_2), {\mbf D}({\mbf A}_2), \phi_2\rangle$ are the  triples associated to $\A_1$ and to $\A_2$ respectively, then $\phi_1(b)\cong {\mbf C}^\omega$ while $\phi_2(b)$ is the trivial filter whose only element is $\top$.
\end{example}

\subsection{The category of triples}
We define the \textit{category of triples}, that we shall denote by $\sh T$, as follows:

\medskip

\noindent \textit{Objects:} Triples $(\B, \D, \phi)$ such that $\B$ is a Boolean algebra, $\D$ is a residuated lattice and $\phi$ is lattice-homomorphism preserving $\bot$ and $\top$, from $\B$ into $\cc{F}_i(\D)$.

\noindent \textit{Morphisms:} Given triples $(\B_i, \D_i, \phi_i)$, $i = 1, 2$, we say that a morphism from $(\B_1, \D_1, \phi_1)$ to $(\B_2, \D_2, \phi_2)$  is a pair $(h, k)$ such that:
\bg{enumerate}
\item[$M_1$] \fun{h}{\B_1}{\B_2}\ is a Boolean algebra homomorphism,
\item[$M_2$] \fun{k}{\D_1}{\D_2}\ is a residuated lattice homomorphism, and
\item[$M_3$] For all $a \in B_1$, $k(\phi_1(a)) \subseteq \phi_2(h(a))$.
\e{enumerate}

\medskip

It is easy to check that given morphisms
\begin{center}
\fun{(h, k)}{(\B_1, \D_1, \phi_1)}{(\B_2, \D_2, \phi_2)}\
\end{center}
and
\begin{center}
\fun{(h',k')}{(\B_2, \D_2,\phi_2)}{(\B_3, \D_3, \phi_3)}\
\end{center}
the composition  $(h'h,k'k)$ is a morphism from $(\B_1, \D_1, \phi_1)$ to $(\B_3, \D_3, \phi_3)$, and that this composition of morphisms is associative. Moreover, for each object $(\B, \D, \phi)$, the pair $(id_{\B}, id_{\D})$ is a morphism which is a unit for composition. Hence $\sh T$ as defined above is really a category.

Condition $M_3$ is equivalent to the assertion that the correspondence
\bg{equation}\label{eq:morphism-quotients} d/\phi_1(a) \mapsto k (d)/\phi_2(h(a))\e{equation}
is a well defined function from  $ {\mbf D}_1/\phi_1(a)$ onto $k({\mbf D}_1)/\phi_2(h(a))$ for all $a \in B$. Clearly this function is a residuated lattice homomorphism.

\bg{lemma}\label{isomorphismsinT} A morphism \fun{(h,k)}{(\B_1, \D_1, \phi_1)}{(\B_2, \D_2, \phi_2)}\ is an isomorphism in $\sh T$ if and only if the following conditions are satisfied:
\bg{enumerate}
\item[$I_1$]  $h$ is an isomorphism from $\B_1$ onto $\B_2$,
\item[$I_2$] $k$ is an isomorphism from $\D_1$ onto $\D_2$, and
\item[$I_3$] For all $a \in B_1$, $k(\phi_1 (a)) = \phi_2(h(a))$.
\e{enumerate}\e{lemma}
\bg{proof} Suppose that
\begin{center}
\fun{(h,k)}{(\B_1, \D_1, \phi_1)}{(\B_2, \D_2, \phi_2)}\
\end{center}
is an isomorphism. Then there is a morphism
\begin{center}
\fun{(h',k')}{(\B_2, \D_2, \phi_2)}{(\B_1, \D_1, \phi_1)}\
\end{center}
such that $(h'h,k'k) = (id_{\B_{1}}, id_{\D_{1}}) = (hh',kk')$. This implies that conditions $I_1$ and $I_2$ are satisfied and that $h' = h^{-1}$ and $k' = k^{-1}$. Since for each $a \in B_1$, $k'(\phi_2 (h (a))) \subseteq \phi_1(h'(h(a))) = \phi_(a)$, we have $$\phi_2(h(a)) = k(k'(\phi_2(h(a)))) \subseteq k(\phi_1(a)) \subseteq \phi_2 (h(a)),$$ and this proves $I_3$.

\noindent Suppose now that \fun{(h,k)}{(\B_1, \D_1, \phi_1)}{(\B_2, \D_2, \phi_2)}\ satisfy $I_1$, $I_2$ and $I_3$. To complete the proof it is sufficient to show that $$\fun{(h^{-1},k^{-1})}{(\B_2, \D_2, \phi_2)}{(\B_1, \D_1, \phi_1)}$$ is a morphism in $\sh T$. Obviously $(h^{-1}, k^{-1})$ satisfy $M_1$ and $M_2$. On the other hand, $k^{-1}(\phi_2(h(a))) = k^{-1}(k(\phi_1(a))) = \phi_1(h^{-1}(h(a)))$, and since for each $b \in B_2$ there is $a \in B_1$ such that $b = h(a)$, we have that also $M_3$ is satisfied.
\e{proof}

\medskip
\subsection{The functor $\T$}

In this section we will define a functor $\T$ from the category $\mathbb{SRL}$ of Stonean residuated lattices into the category  $\sh T$ of triples.

\medskip

For each $\A \in \bb{SRL}$ define
\begin{equation}\label{functor_T}
\T(\A) = (\B(\A), \D(\A), \phi_{\A}).
 \end{equation}
 Suppose that  $\A_1, \A_2 \in \bb{SRL}$ and \fun{f}{\A_1}{\A_2}\ is an homomorphism. For $i=1,2$, let $\B_i=\B(\A_i),$  $\D_i=\D(\A_i)$ and $\phi_{{\mbf A}_i}=\phi_i.$  We have that for each $a \in B_1$, $f(\phi_1 (a))\subseteq \phi_2 (f(a))$, and  since $f(B_1) \subseteq B_2$ and $f(D_1) \subseteq D_2$, the pair of restrictions $(f\!\!\upharpoonright_{\B_{1}}, f\!\!\upharpoonright_{\D_{1}})$ is a morphism from $\T(\A_1)$ to $\T(\A_2)$ in $\sh T$. Hence if we define $$\T(f) =  (f\!\!\upharpoonright_{\B_{1}}, f\!\!\upharpoonright_{\D_{1}}),$$ it follows that $\T$ is  a functor from $\mathbb{SRL}$ to $\sh T$.
\bg{remark}\label{Tfaithful} It follows from Lemma~\ref{stonedecomposition} that the functor $\T$ is faithful: Given homomorphisms $f, g$ from $\A_1$ into $\A_2$, if $\T(f) = \T(g)$, then $f = g$.
\e{remark}

The following two results, that follow from $M_3$, will play a crucial role next. For both of them assume that $\A_1, \A_2$ are Stonean residuated lattices and $(h, k)$ will denote a morphism from $\T(\A_1)$ into $\T(\A_2)$.
\bg{lemma}\label{unicidad} Let $a \in B(\A_1)$ and $d, e \in D(\A_1)$. If $a \ast d = a \ast e$, then $h(a) \ast k(d) = h(a) \ast k(e)$.\e{lemma}
\bg{proof}
It is easy to see that for $d, e\in D(\A_1)$ one has that $a\wedge d=a\ast d=a\ast e=a\wedge e$ if and only if $m=(e\to d)\ast(d\to e)\in \phi_1(\neg a).$ Then $k(m)$ is in $\phi_2(h(\neg a))$ and this implies that $h(a)\ast k(d)=h(a)\ast k(e).$

\e{proof}

\begin{lemma}\label{dist_sup} Let $a\in B(\A_1)$ and $d\in D(\A_1)$. Then $k(a\vee d)=h(a)\vee k(d)$.\end{lemma}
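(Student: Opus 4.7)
The plan is to apply the cancellation principle in Lemma~\ref{basicboolean}$(\ref{boolxx})$ with the Boolean element $b=\neg h(a)$: to conclude $k(a\vee d)=h(a)\vee k(d)$ it suffices to prove (i) both elements are $\ge h(a)=\neg(\neg h(a))$ and (ii) their infima with $\neg h(a)$ coincide. Note first that the equation makes sense, since $a\vee d\in D(\A_1)$: by Lemma~\ref{basicbounded}$(\ref{bb7})$, $\neg(a\vee d)=\neg a\wedge\neg d=\bot$, and similarly $h(a)\vee k(d)\in D(\A_2)$.

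For (i), since $a\vee d\ge a$ and $a\vee d$ is dense, $a\vee d\in\phi_1(\neg a)$, because $\phi_1(\neg a)=[\neg\neg a)\cap D(\A_1)=[a)\cap D(\A_1)$ by Lemma~\ref{basicboolean}$(\ref{bool0})$. The morphism axiom $M_3$ then gives $k(a\vee d)\in\phi_2(h(\neg a))=\phi_2(\neg h(a))=[h(a))\cap D(\A_2)$, so $k(a\vee d)\ge h(a)$; the inequality $h(a)\vee k(d)\ge h(a)$ is immediate.

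For (ii), Lemma~\ref{basic}$(\ref{distprod})$ together with $\neg a\wedge a=\bot$ yields in $\A_1$ the identity $\neg a\ast(a\vee d)=(\neg a\ast a)\vee(\neg a\ast d)=\neg a\ast d$. Applying Lemma~\ref{unicidad} with the Boolean $\neg a$ and the dense elements $d$ and $a\vee d$ produces $\neg h(a)\ast k(a\vee d)=\neg h(a)\ast k(d)$, which by Lemma~\ref{basicboolean}$(\ref{bool2})$ rewrites as $\neg h(a)\wedge k(a\vee d)=\neg h(a)\wedge k(d)$. An analogous computation in $\A_2$, again via $(\ref{distprod})$ and $(\ref{bool2})$, gives $\neg h(a)\wedge(h(a)\vee k(d))=(\neg h(a)\wedge h(a))\vee(\neg h(a)\wedge k(d))=\neg h(a)\wedge k(d)$, so (ii) is established.

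Combining (i) and (ii) via Lemma~\ref{basicboolean}$(\ref{boolxx})$ concludes the proof. The only delicate step is (i): the abstract filter containment $M_3$ must be translated into the pointwise inequality $k(a\vee d)\ge h(a)$ in $\A_2$, which hinges on recognising $\phi_2(\neg h(a))$ as the principal dense cone with vertex $h(a)$, an identification made possible by the involution $\neg\neg h(a)=h(a)$ available because $h(a)$ is Boolean. Once (i) is secured, (ii) is essentially Lemma~\ref{unicidad} followed by Boolean distributivity, and the assembly through $(\ref{boolxx})$ is automatic.
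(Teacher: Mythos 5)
Your proof is correct. It agrees with the paper's proof in the first step: both arguments apply $M_3$ to the dense element $a\vee d\in\phi_1(\neg a)$ to obtain $k(a\vee d)\geq h(a)$. Where you genuinely diverge is in the harder direction. The paper proves the remaining inequality $k(a\vee d)\leq h(a)\vee k(d)$ by a \emph{second} direct application of $M_3$: it observes that $(a\vee d)\to d$ is dense and lies above $\neg a$, hence belongs to $\phi_1(a)$, so that $\neg h(a)\leq k((a\vee d)\to d)=k(a\vee d)\to k(d)$, and then residuation together with the Boolean identity $\neg h(a)\to x=h(a)\vee x$ (Lemma~\ref{basicboolean}) closes the argument. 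You instead avoid proving any second inequality: you feed the identity $\neg a\ast(a\vee d)=\neg a\ast d$ into Lemma~\ref{unicidad}, where the needed instance of $M_3$ is already packaged, deduce that $k(a\vee d)$ and $h(a)\vee k(d)$ have the same meet with $\neg h(a)$, and conclude in one stroke by the cancellation principle of Lemma~\ref{basicboolean}~(\ref{boolxx}). Both routes have comparable length and both rest ultimately on $M_3$; the paper's is more self-contained (two explicit uses of $M_3$ plus residuation), while yours reuses machinery the paper has already established (Lemma~\ref{unicidad} and the Boolean decomposition and cancellation properties), and in fact anticipates the pattern of the proof of Theorem~\ref{f_morphism}, where well-definedness of the reconstructed homomorphism rests on exactly this combination of Lemma~\ref{unicidad} with Boolean cancellation. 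One presentational remark: in your computation of $\neg h(a)\wedge(h(a)\vee k(d))$ you distribute the meet over the join, which is not available in general since $\mbf{L}(\A_2)$ need not be distributive; your citation of (\ref{bool2}) does repair this (replacing $\wedge$ by $\ast$ against the Boolean $\neg h(a)$ and using (\ref{distprod})), but it would be cleaner to write the chain with $\ast$ explicitly, or to invoke Lemma~\ref{dist_booleanos} directly.
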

\begin{proof}
First, as $\neg(\neg a)=a\leq a\vee d$ and $a\vee d\in D(\A_1)$, $a\vee d\in \phi_1(\neg a)$. Therefore $k(a\vee d)\in \phi_2(h(\neg a))$, but this means that $h(a)=\neg h(\neg a)\leq k(a\vee d)$, and since $k$ is order preserving we get $h(a)\vee k(d)\leq k(a\vee d)$.

For the other inequality, observe that $\neg a=a\to\bot\leq a\to d=(a\vee d)\to d$ and that $(a\vee d)\to d$ is dense, therefore $(a\vee d)\to d\in \phi_1(a)$. As before, $k((a\vee d)\to d)\in \phi_2(h(a))$, so $\neg h(a)\leq k((a\vee d)\to d)=k(a\vee d)\to k(d)$, and by residuation this is equivalent to
$$k(a\vee d)\leq \neg h(a)\to k(d)=h(a)\vee k(d).$$
\end{proof}

If $x \in A_1$ can be written as $x = \neg\neg x \ast d$, where $d \in D(\A_1)$, we  set
 \bg{equation} \label{eq:def f} f(x) = h(\neg\neg x) \ast k(d),\e{equation} and it follows from the above Lemma \ref{unicidad} that $f$ is a well defined function from $A_1$ into $A_2$.

 \bg{theorem}\label{f_morphism} The function \fun{f}{A_1}{A_2}\ defined by (\ref{eq:def f}) satisfies the following properties:
\bg{enumerate}[(i)]
\item $f\!\!\upharpoonright_{B(\A_{1})} = h$, $f\!\!\upharpoonright_{D(A_{1})} = k$,
\item $f(\bot) = \bot$, $f(\top) = \top$,
\item $f(x \land y) = f(x) \land f(y)$,
\item $f(x \ast y) = f(x) \ast f(y)$,
\item $f(x \to y) = f(x) \to f(y)$,
\item $f(x \lor y) = f(x) \lor f(y).$
\end{enumerate}
\e{theorem}

\bg{proof} Properties $(i)$ and  $(ii)$ are rather obvious. For the remainder of the proof we assume that $x = \neg\neg x \land d$ and $y = \neg\neg y \land e$, with $d, e \in D(\A_1)$. $(iii)$ follows from the fact that
$$x \land y = (\neg\neg x \land d) \land (\dn y \land e) = \dn (x\land y) \land (d \land e).$$
The proof of $(iv)$ is similar.

To prove  $(v)$ recall that if $a\in B({\mbf A}_1)$ and $r\in D({\mbf A}_1)$ then
$r\le a\to r$, therefore $a\to r\in D({\mbf A}_1)$. Using $d)$ in Lemma \ref{basicbounded} we also have $r\to a=a$, because
$$r\to a=r\to \neg\neg a=\neg a\to \neg r=\neg a\to \bot=\neg\neg a=a.$$
With the previous in mind, we can prove
\begin{align*}f(x)\to f(y)&=\left(h(\neg\neg x)\wedge k(d)\right) \to \left(h(\neg\neg y)\wedge k(e)\right)\\
&= \left(\left(h(\neg\neg x)\wedge k(d)\right) \to h(\neg\neg y)\right)\wedge \left(\left(h(\neg\neg x)\wedge k(d)\right) \to k(e)\right)\\
&= \left(k(d) \to \left(h(\neg\neg x)\to h(\neg\neg y)\right)\right)\wedge \left(h(\neg\neg x)\to\left(k(d)\to k(e)\right)\right)\\
&= \left(h(\neg\neg x)\to h(\neg\neg y)\right)\wedge \left(h(\neg\neg x)\to k(d\to e)\right)\\
&= h\left(\neg\neg x\to \neg\neg y)\right)\wedge \left(h(\neg\neg x)\to k(d\to e)\right).
\end{align*}

\noindent As $\neg\neg x$ (and therefore $\neg x$) is Boolean and $(d\to e)$ is dense, by Lemma \ref{dist_sup},
\begin{align*}h(\neg\neg x)\to k(d\to e) &= \neg h(\neg\neg x)\vee k(d\to e)\\
&= h(\neg x)\vee k(d\to e)\\
&= k(\neg x \vee (d\to e))\\
&= k(\neg\neg x \to (d\to e)).\end{align*}

\smallskip

Finally, we obtain
\begin{align*}f(x)\to f(y) &=h\left(\neg\neg x\to \neg\neg y\right)\wedge k(\neg\neg x \to (d\to e))\\
&= f\left((\neg\neg x\to \neg\neg y)\wedge (\neg\neg x \to (d\to e))\right)\\
&= f\left((\neg\neg x\to (d\to \neg\neg y))\wedge (\neg\neg x \to (d\to e))\right)\\
&= f\left(((\neg\neg x *d)\to \neg\neg y)\wedge ((\neg\neg x*d) \to e)\right)\\
&= f\left((x\to \neg\neg y)\wedge (x \to e)\right)\\
&= f\left(x\to (\neg\neg y\wedge e)\right)\\
&= f(x\to y).\end{align*}

Using that $f$ preserves $\wedge$, Lemma \ref{dist_booleanos} and Lemma \ref{dist_sup} we get $(vi)$, as
\begin{align*} f(x\vee y) &= f((\neg\neg x \wedge d)\vee(\neg\neg y \wedge e))\\
	&= f((\neg\neg x \vee \neg\neg y)\wedge (\neg\neg x \vee e)\wedge (d\vee \neg\neg y)\wedge(d \vee e))\\
	&= f(\neg\neg x \vee \neg\neg y)\wedge f(\neg\neg x \vee e)\wedge f(d\vee \neg\neg y)\wedge f(d \vee e)\\
	&= h(\neg\neg x \vee \neg\neg y)\wedge k(\neg\neg x \vee e)\wedge k(d\vee \neg\neg y)\wedge k(d \vee e)\\
	&= (h(\neg\neg x) \vee h(\neg\neg y))\wedge (h(\neg\neg x)\vee k(e)) \\
	&\hspace{1cm}\wedge (k(d)\vee h(\neg\neg y))\wedge (k(d) \vee k(e))\\
	&= (h(\neg\neg x) \wedge k(d))\vee(h(\neg\neg y) \wedge k(e))\\
	&= f(x)\vee f(y).\end{align*}
\e{proof}

Therefore we have proved that, if $\A_1$ and $\A_2$ are in $\mathbb{SRL}$, given a morphism $(h,k)$ from $\T(\A_1)$ to $\T(\A_2)$ there exists a homomorphism $f$ as given in (\ref{eq:def f}) from $\A_1$ to $\A_2$ such that $\T(f)=(h,k).$ Summing up we get:

\begin{corollary}\label{Tfull}  The functor $\T$ is full.
\end{corollary}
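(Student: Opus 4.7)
The plan is to observe that Corollary \ref{Tfull} is essentially a bookkeeping consequence of Theorem \ref{f_morphism}, which is the real technical content. I would fix Stonean residuated lattices $\A_1,\A_2$ and an arbitrary morphism $(h,k)$ from $\T(\A_1)$ to $\T(\A_2)$ in $\sh T$, and produce an $\mathbb{SRL}$-homomorphism $f\colon \A_1\to \A_2$ with $\T(f)=(h,k)$. Fullness of $\T$ then amounts to surjectivity of the induced map on Hom-sets, which is exactly this statement.

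First, I would recall from Lemma \ref{stonedecomposition} that every $x\in A_1$ decomposes as $x=\neg\neg x\ast d$ with $\neg\neg x\in B(\A_1)$ and $d\in D(\A_1)$ (one may take $d=\neg\neg x\to x$). This suggests setting
\[
f(x)=h(\neg\neg x)\ast k(d),
\]
which is the definition recorded in (\ref{eq:def f}). Well-definedness requires checking that the right-hand side is independent of the choice of the dense representative $d$: identity (\ref{doblenegacion}) forces the Boolean part to be $\neg\neg x$, while Lemma \ref{unicidad} shows that if $\neg\neg x\ast d=\neg\neg x\ast e$ then $h(\neg\neg x)\ast k(d)=h(\neg\neg x)\ast k(e)$. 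So $f$ is a genuine function $A_1\to A_2$.

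At this point the heavy lifting has already been done. Theorem \ref{f_morphism} asserts that $f$ preserves $\bot,\top,\land,\lor,\ast$ and $\to$, hence is a homomorphism in $\mathbb{SRL}$. Moreover, part $(i)$ of that theorem gives $f\!\!\upharpoonright_{B(\A_1)}=h$ and $f\!\!\upharpoonright_{D(\A_1)}=k$, so by the definition (\ref{functor_T}) of $\T$ on morphisms we get
\[
\T(f)=(f\!\!\upharpoonright_{B(\A_1)},\,f\!\!\upharpoonright_{D(\A_1)})=(h,k),
\]
as required.

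The only place where a genuine obstacle arises is already sealed off inside Theorem \ref{f_morphism}, namely preservation of $\to$ and $\vee$, whose verifications depend crucially on condition $M_3$ via Lemma \ref{dist_sup} and on the weak distributivity of Lemma \ref{dist_booleanos}. Once Theorem \ref{f_morphism} is invoked, nothing remains except to read off $\T(f)=(h,k)$ from property $(i)$ and conclude that every triple morphism lifts, which is precisely fullness of $\T$.
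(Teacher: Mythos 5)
Your proposal is correct and follows exactly the paper's own argument: the paper also obtains fullness by taking an arbitrary triple morphism $(h,k)$, defining $f$ by (\ref{eq:def f}), using Lemma \ref{unicidad} for well-definedness, and reading off $\T(f)=(h,k)$ from Theorem \ref{f_morphism}$(i)$ together with the homomorphism properties. The only cosmetic slip is that (\ref{functor_T}) defines $\T$ on objects, while the action on morphisms is given in the text immediately after it; this does not affect the argument.
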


To close this section we will use the results about sheaves to prove that the functor $\T$ is dense.

\begin{theorem}\label{teo:T_denso} The functor $\T$ is dense, i.e., for each triple $(\B, \D, \phi)$ there is $\A \in \mathbb{SRL}$ such that $\T(\A)=(\B, \D, \phi).$
\end{theorem}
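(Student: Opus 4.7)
The plan is to apply the sheaf construction of Section~2.4 to the components of the given triple. Fix $(\B,\D,\phi)\in \sh T$ and let $X$ be the Stone space of $\B$, so that Stone duality gives a Boolean isomorphism $B\to\C(X)$, $b\mapsto\widetilde b$, with inverse $a\mapsto\overline a$. Out of the covariant $\phi\colon\B\to\cc{F}_i(\D)$ I would build a \emph{dual} bounded lattice homomorphism $\varphi\colon\C(X)\to\cc{F}_i(\D)$ by the prescription
\[
  \varphi(a)\;=\;\phi\bigl(\overline{X\smsm a}\bigr).
\]
That $\varphi$ really is a dual bounded lattice homomorphism follows routinely from Stone duality reversing complements and from $\phi$ preserving finite meets, joins, $\bot$ and $\top$; the degenerate case in which $\D$ is trivial may be handled separately by taking $\A=\B$.

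Next I would invoke Theorem~\ref{teo:algebra_of_global_sections} to form $\A:=\cc{A}(\langle \C,\D,\varphi\rangle)$, the Stonean residuated lattice of continuous global sections of the associated sheaf. The lemmas already established in Section~2.4 hand over, essentially for free, a Boolean isomorphism $\C(X)\to\B(\A)$ given by $a\mapsto\hat a$ and a residuated lattice isomorphism $\D\to\D(\A)$ given by $d\mapsto\hat d$. Composing the former with Stone duality I obtain a Boolean isomorphism $h\colon\B\to\B(\A)$, $h(b)=\widehat{\widetilde b}$, and setting $k(d)=\hat d$ gives a residuated lattice isomorphism $k\colon\D\to\D(\A)$. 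These will serve as the candidate first two coordinates of an isomorphism $(\B,\D,\phi)\to\T(\A)$ in $\sh T$.

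The substantive step is verifying condition $I_3$ of Lemma~\ref{isomorphismsinT}, namely that $k(\phi(b))=\phi_{\A}(h(b))$ for every $b\in B$. Writing $a=\widetilde b$, a direct inspection of the formulas for $\hat a$ and the pointwise operations on sections shows that $\neg\hat a$ is the characteristic section of the clopen $X\smsm a$. Hence a dense section $\hat d$ lies in $\phi_{\A}(\hat a)=[\neg\hat a)\cap D(\A)$ iff $\hat d(x)=\top$ for every $x\in X\smsm a$, iff $d\in F_x$ for every such $x$, iff, by (\ref{eq:intersectionFxina}), $d\in\varphi(X\smsm a)=\phi(b)$. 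Thus $k$ carries $\phi(b)$ bijectively onto $\phi_{\A}(h(b))$, condition $I_3$ holds, and $\T(\A)\cong(\B,\D,\phi)$ as required.

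I expect the main difficulty to be bookkeeping the complements forced by the variance mismatch between the dual homomorphism $\varphi$ demanded by the sheaf construction and the covariant $\phi$ carried by objects of $\sh T$; once that is sorted out, the theorem is a direct repackaging of the results of Section~2.4.
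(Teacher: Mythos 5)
Your construction is the same as the paper's --- the Stone space $X$ of $\B$, the dual homomorphism $\varphi(a)=\phi(\overline{X\smsm a})$, the algebra $\cc A$ of continuous global sections, and the hat maps as candidate components of an isomorphism of triples --- but the complement bookkeeping that you yourself single out as the main difficulty is exactly where your argument fails. In the paper's construction the Boolean section $\hat a$ attached to a clopen $a$ takes the value $o$ (the stalk bottom) \emph{on} $a$ and $\top$ \emph{off} $a$; in particular $\hat X=O$ and $\hat\emptyset=\top$, so $c\mapsto\hat c$ is an order-\emph{reversing} bijection from $\C(X)$ onto $B(\cc A)$. You tacitly use the opposite, characteristic-function convention: you claim that $\neg\hat a$ equals $\top$ on $X\smsm a$, whereas the pointwise computation gives $\neg\hat a=\top$ on $a$ and $o$ on $X\smsm a$. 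Two things break as a consequence. First, your $h(b)=\widehat{\widetilde b}$ sends $\top_{\B}$ to $\hat X=O$, the bottom of $\cc A$, so $h$ is not a Boolean homomorphism and condition $I_1$ of Lemma~\ref{isomorphismsinT} already fails. Second, the pivotal step in your verification of $I_3$, namely ``$\hat d\in\phi_{\cc A}(\hat a)$ iff $\hat d(x)=\top$ for every $x\in X\smsm a$,'' is false: since $\phi_{\cc A}(\hat a)=[\neg\hat a)\cap D(\cc A)$ and $\neg\hat a$ equals $\top$ precisely on $a$, the correct condition is ``for every $x\in a$.'' With your $h$ one actually obtains $k(\phi(b))=\phi_{\cc A}(h(\neg b))$, which is not $I_3$.

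The repair is a one-line change, after which your argument coincides with the paper's proof: keep $\varphi$ and $k$ as you defined them, but set $h(b)=\widehat{X\smsm\widetilde b}=\widehat{\widetilde{\neg b}}$. This $h$ is the composite of two order-reversing bijections, hence a genuine Boolean isomorphism, and the chain of equivalences closes correctly: $d\in\phi(b)=\varphi(X\smsm\widetilde b)$ iff, by (\ref{eq:intersectionFxina}), $d\in F_x$ for all $x\in X\smsm\widetilde b$, iff $\hat d(x)=\top$ on $X\smsm\widetilde b$, iff $\hat d\geq\neg h(b)=\widehat{\widetilde b}$, iff $\hat d\in\phi_{\cc A}(h(b))$; surjectivity of $d\mapsto\hat d$ onto the dense sections then gives $k(\phi(b))=\phi_{\cc A}(h(b))$. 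In fairness, the paper invites this slip: it asserts that $a\mapsto\hat a$ is a Boolean isomorphism while noting $\hat X=O$ immediately afterwards, and its own proof of Theorem~\ref{teo:T_denso} is coherent only if, when writing $h(a)=\hat a$, one reads $a\in B$ as the clopen $\{x\in X: a\notin x\}$. Your separate treatment of the degenerate case of trivial $\D$ is a minor point the paper omits, and is fine.
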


\noindent\begin{proof}
Given a triple $( \B, \D, \phi )$, take $X$ to be the Stone space of the Boolean algebra $\B$, and  \fun{\varphi}{\C(X)}{\cc{F}_i(\D)}\ given by $\varphi (a) = \phi (\neg a)$ for each $a\in \B$. Then $\varphi$ is a dual homomorphism and, according to Theorem \ref{teo:presheafofSRL} the system  $$\langle\{\Sa(\D/\varphi(a))\}_{a \in \C(X)}, \{\Sa(\rho_{ab})\}_{a \subseteq b}\rangle$$ is a presheaf of directly indecomposable Stonean residuated lattices. Therefore let

 $$\A=\cc{A}(\langle X, \D, \varphi\rangle)$$ be the algebra of continuous global sections of the sheaf $\sh S$ constructed in Theorem \ref{teo:algebra_of_global_sections}, which is in $\mathbb{SRL}.$

Let \fun{h}{\B}{\B({\A})}\ and \fun{k}{\D}{\D(\A)}\ be defined by $h(a) = \hat{a}$ for each $a \in B$ and by $k(d) = \hat{d}$ for all $d \in D$. Clearly $h$ is an isomorphism from $\B$ onto $\B(\A)$ and $k$ is an isomorphism from $\D$ onto $\D(\A)$, and by taking into account  (\ref{eq:intersectionFxina}) and Lemma~\ref{onto} we have the following equivalences, for $a \in B$ and $d \in D$:
$$d \in \phi (a) = \varphi (\neg a) \Leftrightarrow d \in \bigcap_{x \in \neg a} F_x \Leftrightarrow \hat{d} = \top \,\, \mbox{for all}\,\, x \in \neg a \Leftrightarrow \hat{d} \in \phi_{\cc{A}}(\hat{a}).$$
Hence $k(\phi (a)) = \phi_{\cc{A}}(h(a))$, and $(h, k)$ is an isomorphism between $(\B,\D, \phi)$ and $\T(\A)$.
\end{proof}

Following \cite[page 91]{MacLane}, from Remark \ref{Tfaithful}, together with Corollary \ref{Tfull} and Theorem \ref{teo:T_denso} we can assert:
 \begin{theorem}
 The functor $\T$ from $\mathbb{SRL}$ to  $\sh{T}$ that acts on objects as $\T(\A)=(\B(\A), \D(\A), \phi_{\A})$ and if $f:\A_1\to \A_2$ is homomorphism of Stonean residuated lattices $$\T(f) =  (f\!\!\upharpoonright_{\B(\A_1)}, f\!\!\upharpoonright_{\D(\A_1)})$$ defines an equivalence of categories.
 \end{theorem}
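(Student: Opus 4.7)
The plan is to invoke the classical criterion (cited as \cite[page 91]{MacLane}) that a functor is an equivalence of categories if and only if it is simultaneously full, faithful, and essentially surjective on objects (dense). Each of these three properties for $\T$ has already been established individually in the preceding material, so the theorem reduces to collecting them.

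Specifically, I would cite Remark \ref{Tfaithful} for faithfulness, which rests on the decomposition $x = \neg\neg x \ast (\neg\neg x \to x)$ of Lemma \ref{stonedecomposition}: two $\mathbb{SRL}$-homomorphisms that agree on both $B(\A)$ and $D(\A)$ must agree everywhere, so they cannot be distinguished from their restrictions $(f\!\!\upharpoonright_{\B(\A)}, f\!\!\upharpoonright_{\D(\A)})$. Fullness is exactly Corollary \ref{Tfull}, obtained by verifying in Theorem \ref{f_morphism} that, starting from any triple morphism $(h,k)$, the assignment $f(\neg\neg x \ast d) := h(\neg\neg x) \ast k(d)$ produces a well-defined $\mathbb{SRL}$-homomorphism whose image under $\T$ is the original $(h,k)$. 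Density is Theorem \ref{teo:T_denso}: for an arbitrary triple $(\B, \D, \phi)$, the algebra $\cc{A}(\langle X, \D, \varphi\rangle)$ of continuous global sections of the sheaf built over the Stone space $X$ of $\B$, with $\varphi(a) = \phi(\neg a)$, recovers the triple up to isomorphism via the natural maps $h(a) = \hat a$ and $k(d) = \hat d$.

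There is no substantive obstacle at this final step: the genuinely hard work was done previously in Theorem \ref{f_morphism} and in the sheaf construction underlying Theorem \ref{teo:T_denso}. One simply appeals to the standard criterion to infer the existence of a quasi-inverse functor $\sh T \to \mathbb{SRL}$ together with the required natural isomorphisms. An explicit quasi-inverse could, if desired, be read off the sheaf construction of Theorem \ref{teo:T_denso}, but no such explicit presentation is needed to conclude the equivalence.
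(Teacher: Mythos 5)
Your proposal matches the paper's own argument exactly: the paper derives this theorem by citing the criterion from \cite[page 91]{MacLane} that a full, faithful, and dense functor is an equivalence, combining Remark~\ref{Tfaithful} (faithfulness), Corollary~\ref{Tfull} (fullness), and Theorem~\ref{teo:T_denso} (density), just as you do. Your additional remarks about where the real work lies (Theorem~\ref{f_morphism} and the sheaf construction) and the possibility of extracting an explicit quasi-inverse are accurate but not needed.
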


\section{Applications}

\subsection{Equations and restrictions of the functor ${\bf T}.$}

\begin{lemma}\label{lemma:subvariedades}(\cite[Th. 2.6]{Cig07}) For each subvariety $\bb{V}$ of $\bb{SRL}$, $$\bb{V}^\ast=\{{\A}\in \bb{RL}: {\mbf S(\A)}\in\bb{V}\}$$ is a subvariety of $\bb{RL}$.\end{lemma}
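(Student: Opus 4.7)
The plan is to apply Birkhoff's HSP theorem: since $\bb{V}^\ast$ is defined by pulling back the class $\bb{V}$ through the functor $\mbf{S}$, it will suffice to show that $\bb{V}^\ast$ is closed under the three class operators $H$, $S$, and $P$ in $\bb{RL}$. In each case the strategy is the same: translate the $\bb{RL}$-level construction applied to some $\A\in \bb{V}^\ast$ into a construction on $\mbf{S}(\A)\in\bb{V}$ and invoke the closure of $\bb{V}$ under the corresponding operator.

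Closure under $S$ and $H$ will be the easy cases. For $S$, if $\A'$ is a subalgebra of $\A\in\bb{V}^\ast$, the concrete description $S(\A)=\{o\}\cup A$ of Theorem~\ref{const} shows that $\mbf{S}(\A')=\{o\}\cup A'$ is a subalgebra of $\mbf{S}(\A)\in\bb{V}$, so it lies in $\bb{V}$ and hence $\A'\in\bb{V}^\ast$. For $H$, every homomorphic image of $\A\in\bb{V}^\ast$ is isomorphic to a quotient $\A/F$ for some i-filter $F$, and Remark~\ref{remark:squotient} gives $\mbf{S}(\A/F)\cong \mbf{S}(\A)/F$, which is a homomorphic image of $\mbf{S}(\A)\in\bb{V}$ and hence belongs to $\bb{V}$.

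The main obstacle will be closure under direct products, because $\mbf{S}$ does not commute with products: $\mbf{S}(\prod_{i\in I}\A_i)$ adjoins a single new bottom, whereas $\prod_{i\in I}\mbf{S}(\A_i)$ contains many tuples with some coordinates equal to the added $o$. I would get around this by exhibiting a subalgebra embedding
\[
\iota\colon \mbf{S}\!\Bigl(\prod_{i\in I}\A_i\Bigr)\longrightarrow \prod_{i\in I}\mbf{S}(\A_i)
\]
sending the adjoined $o$ to the constant tuple $(o_{\mbf{S}(\A_i)})_{i\in I}$ and each $(a_i)_{i\in I}\in\prod_{i\in I}A_i$ to itself. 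Using the case-by-case definition of $\to$ in Theorem~\ref{const} and the fact that $o_{\mbf{S}(\A_i)}\ast x_i=o_{\mbf{S}(\A_i)}$, a routine check confirms that the image $\{(o_{\mbf{S}(\A_i)})_{i\in I}\}\cup \prod_{i\in I}A_i$ is closed under $\ast$, $\to$, $\land$, $\lor$ inside $\prod_{i\in I}\mbf{S}(\A_i)$, and that $\iota$ preserves the operations. Then, since each $\mbf{S}(\A_i)\in\bb{V}$ and $\bb{V}$ is a variety, $\prod_{i\in I}\mbf{S}(\A_i)\in\bb{V}$; its subalgebra $\mbf{S}(\prod_{i\in I}\A_i)$ therefore belongs to $\bb{V}$, giving $\prod_{i\in I}\A_i\in\bb{V}^\ast$.

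Combining these three closure properties, Birkhoff's theorem yields that $\bb{V}^\ast$ is a variety of $\bb{RL}$. The only delicate point in the whole argument is the subalgebra embedding in the product step, and even there the verification is purely computational once the map $\iota$ has been identified.
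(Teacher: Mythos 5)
The paper never proves this lemma internally: it is imported by citation from \cite[Th.~2.6]{Cig07}, so there is no in-paper proof to compare yours against. Taken on its own terms, your HSP argument is correct and self-contained, and it uses exactly the right ingredients already available in the paper: the explicit description of $\mbf{S}(\A)$ in Theorem~\ref{const} for closure under subalgebras (after choosing the same adjoined bottom, $\{o\}\cup A'$ is literally a subuniverse of $\mbf{S}(\A)$ containing the constant $\bot$), the i-filter/congruence correspondence combined with Remark~\ref{remark:squotient} for closure under homomorphic images, and, for products, the genuinely non-obvious point --- that $\mbf{S}$ does not commute with direct products --- which you repair by embedding $\mbf{S}\bigl(\prod_{i\in I}\A_i\bigr)$ onto the subuniverse $\{(o_{\mbf{S}(\A_i)})_{i\in I}\}\cup\prod_{i\in I}A_i$ of $\prod_{i\in I}\mbf{S}(\A_i)$. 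That set is indeed closed under $\ast$, $\to$, $\land$, $\lor$ and contains both constants, so the verification you defer is routine, as claimed.

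One point should be made explicit before invoking Birkhoff. The usual statement of the HSP theorem concerns nonempty classes; equivalently, if empty products are allowed, closure under $P$ already forces the trivial algebra into the class. Now $\mbf{1}\in\bb{V}^\ast$ if and only if $\mbf{S}(\mbf{1})$, which is the two-element algebra $\mbf{2}$, lies in $\bb{V}$, and this holds precisely when $\bb{V}$ is nontrivial (since $\{\bot,\top\}$ is a subuniverse of every nontrivial member of $\bb{SRL}$, every nontrivial subvariety contains $\mbf{2}$). So you should add one line: for nontrivial $\bb{V}$ we have $\mbf{2}\in\bb{V}$, hence $\mbf{1}\in\bb{V}^\ast$, and your three closure properties plus Birkhoff finish the proof; for the trivial variety $\bb{V}$, the class $\bb{V}^\ast$ is empty and the statement is degenerate --- an edge case glossed over by the statement itself and by \cite{Cig07}, but one your product step cannot absorb, since with $I=\emptyset$ there is no embedding of $\mbf{2}$ into the one-element algebra. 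Restricting to nonempty $I$ and disposing of $\mbf{1}$ separately as above removes the issue.
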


Taking into account that subdirectly irreducible algebras 
are directly indecomposable, we have:

\begin{corollary}\label{coreqn}Let $\bb{V}$ be a subvariety of $\bb{SRL}$ and $\epsilon$ be an equation in the language of residuated lattices that is satisfied in $\bb{V}^\ast$. Then $\epsilon$ is satisfied in $\bb{V}$ if and only if for all $\A\in \bb{V}^\ast$, $\epsilon$ is satisfied in $\mbf{S}(\A)$.\end{corollary}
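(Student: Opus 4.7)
The forward direction is immediate: by the definition of $\bb{V}^\ast$, for every $\A\in\bb{V}^\ast$ we have $\mbf{S}(\A)\in\bb{V}$, so if $\epsilon$ holds throughout $\bb{V}$ then it holds in every such $\mbf{S}(\A)$. The work is in the converse, where we must bootstrap from the subalgebras of the form $\mbf{S}(\A)$ with $\A\in\bb{V}^\ast$ back to the whole variety $\bb{V}$.

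For the nontrivial direction, my plan is to exploit Birkhoff's theorem and reduce to subdirectly irreducible algebras. Since $\bb{V}$ is a variety, $\epsilon$ is satisfied by $\bb{V}$ if and only if it is satisfied by every subdirectly irreducible member of $\bb{V}$. As remarked just before the corollary, every subdirectly irreducible algebra is directly indecomposable; in particular every subdirectly irreducible $\A\in\bb{V}\subseteq\bb{SRL}$ is directly indecomposable. By Lemma~\ref{embedding}, any such $\A$ is isomorphic to $\mbf{S}(\mbf{D}(\A))$.

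It then remains to observe that $\mbf{D}(\A)\in\bb{V}^\ast$. This follows directly from the definition in Lemma~\ref{lemma:subvariedades}: since $\mbf{S}(\mbf{D}(\A))\cong\A\in\bb{V}$, the residuated lattice $\mbf{D}(\A)$ belongs to $\bb{V}^\ast$. By the standing hypothesis, $\epsilon$ is satisfied in $\mbf{S}(\mbf{D}(\A))$, and hence in $\A$. Thus $\epsilon$ holds in every subdirectly irreducible member of $\bb{V}$, and consequently in $\bb{V}$ itself.

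I do not expect a real obstacle here; the argument is essentially bookkeeping once one recognizes that the directly indecomposable Stonean residuated lattices are exactly those of the form $\mbf{S}(\A)$ for $\A\in\bb{RL}$ (which is the content of Lemma~\ref{embedding} and Corollary~\ref{cor:equivcat}). The only conceptual point worth stating explicitly in the write-up is the passage from ``subdirectly irreducible'' to ``directly indecomposable'', which is why the hypothesis that $\epsilon$ already holds in $\bb{V}^\ast$ need not be invoked for non-indecomposable algebras at all.
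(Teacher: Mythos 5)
Your proof is correct and follows exactly the argument the paper intends: the corollary is stated right after the remark that subdirectly irreducible algebras are directly indecomposable, and your write-up just makes explicit the same chain (Birkhoff reduction to subdirectly irreducibles, then Lemma~\ref{embedding} to identify each directly indecomposable $\A\in\bb{V}$ with $\mbf{S}(\mbf{D}(\A))$ where $\mbf{D}(\A)\in\bb{V}^\ast$ by definition of $\bb{V}^\ast$). The only cosmetic caveat is that trivial algebras are subdirectly irreducible but not directly indecomposable under the paper's definitions; since they satisfy every equation, this does not affect the argument.
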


For each $\mathbb{V}\subseteq \mathbb{SRL}$ let $\sh{T}_{\mathbb{V}}$ be the full subcategory of $\sh{T}$ whose objects have as second component an element in $\mathbb{V}^{\ast},$ i.e., $(\B,\D, \phi)$ is an object of $\sh{T}_{\mathbb{V}}$ if and only if $\D\in \mathbb{V}^{\ast}.$ Because of the previous results we can conclude:

 \begin{theorem}
The restriction of the functor ${\bf T}$ to $\mathbb{V}$ defines an equivalence between the categories $\mathbb{V}$ and $\sh{T}_{\mathbb{V}}$.
\end{theorem}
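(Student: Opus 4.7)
My plan is to leverage the fact that $\T\colon\mathbb{SRL}\to\sh T$ is already known to be an equivalence. Since $\mathbb{V}$ is a full subcategory of $\mathbb{SRL}$ and $\sh T_{\mathbb V}$ is a full subcategory of $\sh T$, the restriction $\T\!\!\upharpoonright_{\mathbb V}$ is automatically faithful and, once we know it is well-defined, also full. Thus there are really only two things to check: (a) that the assignment $\A\mapsto\T(\A)$ maps $\mathbb V$ into $\sh T_{\mathbb V}$; and (b) that every object of $\sh T_{\mathbb V}$ is isomorphic to $\T(\A)$ for some $\A\in\mathbb V$ (essential surjectivity for the restricted functor).

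For (a), I would argue directly from Lemma~\ref{embedding}: if $\A\in\mathbb V$, then $\{\bot\}\cup D(\A)$ is the universe of a subalgebra of $\A$ isomorphic to $\mbf S(\D(\A))$. Since $\mathbb V$ is a variety and is in particular closed under subalgebras, $\mbf S(\D(\A))\in\mathbb V$, which by the very definition of $\mathbb V^{\ast}$ gives $\D(\A)\in\mathbb V^{\ast}$. Hence the triple $\T(\A)=(\B(\A),\D(\A),\phi_{\A})$ lies in $\sh T_{\mathbb V}$. Morphism-level well-definedness is then immediate from fullness of the subcategories.

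For (b), I would reuse the sheaf construction of Theorem~\ref{teo:T_denso}. Given $(\B,\D,\phi)\in\sh T_{\mathbb V}$, form the Stone space $X$ of $\B$, the dual homomorphism $\varphi(a)=\phi(\neg a)$, and the associated algebra of continuous global sections $\A=\cc A(\langle X,\D,\varphi\rangle)$. Theorem~\ref{teo:T_denso} already gives $\T(\A)\cong(\B,\D,\phi)$, so only $\A\in\mathbb V$ remains. Since $\D\in\mathbb V^{\ast}$ we have $\mbf S(\D)\in\mathbb V$; now, by Remark~\ref{remark:squotient}, each stalk $\mbf S(\D/F_x)$ is isomorphic to $\mbf S(\D)/F_x$, i.e., a homomorphic image of $\mbf S(\D)$, so all stalks lie in $\mathbb V$. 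Because global sections of a sheaf embed (via evaluation at each point) into the product of the stalks as a subalgebra with pointwise operations, $\A$ is a subalgebra of $\prod_{x\in X}\mbf S(\D/F_x)\in\mathbb V$, and therefore $\A\in\mathbb V$.

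The only delicate point, and the place where the argument really uses the subvariety hypothesis, is the passage from $\D\in\mathbb V^{\ast}$ to the membership of every stalk in $\mathbb V$; the clean way to handle this is through Remark~\ref{remark:squotient} together with closure of $\mathbb V$ under $\mathbf H$, $\mathbf S$ and $\mathbf P$, avoiding any detour through directed colimits. With that in hand, the two verifications fit together and, by the standard characterization of equivalences in \cite[page 91]{MacLane}, $\T\!\!\upharpoonright_{\mathbb V}$ is an equivalence between $\mathbb V$ and $\sh T_{\mathbb V}$.
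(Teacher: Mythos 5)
Your proof is correct and follows the same overall strategy as the paper: objects of $\mathbb{V}$ land in $\sh{T}_{\mathbb{V}}$ via Lemma~\ref{embedding} and closure of $\mathbb{V}$ under subalgebras, fullness and faithfulness are inherited from the global equivalence because both subcategories are full, and density is proved by reusing the sheaf construction of Theorem~\ref{teo:T_denso}. The one place where you genuinely diverge is the justification that the algebra $\A$ of global sections lies in $\mathbb{V}$. The paper argues at the level of the presheaf: each $\D/\varphi(a)$ is a homomorphic image of $\D\in\mathbb{V}^{\ast}$, and Lemma~\ref{lemma:subvariedades} (that $\mathbb{V}^{\ast}$ is a variety, hence closed under quotients) gives $\mbf{S}(\D/\varphi(a))\in\mathbb{V}$, the passage from the presheaf components to the algebra of global sections being left implicit (it needs that stalks, as directed colimits of the presheaf algebras, remain in the variety, and that $\A$ embeds in the product of the stalks). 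You instead work directly with the stalks, using Remark~\ref{remark:squotient} to write $\mbf{S}(\D/F_x)\cong\mbf{S}(\D)/F_x$, so that each stalk is a homomorphic image of $\mbf{S}(\D)\in\mathbb{V}$, and then invoke closure of $\mathbb{V}$ under $\mathbf{H}$, $\mathbf{S}$, $\mathbf{P}$ together with the evaluation embedding of $\A$ into $\prod_{x\in X}\mbf{S}(\D/F_x)$. Your variant buys two things: it dispenses with Lemma~\ref{lemma:subvariedades} entirely, needing only the definition of $\mathbb{V}^{\ast}$, and it makes explicit the subalgebra-of-a-product step that the paper's proof takes for granted, avoiding any appeal to colimits. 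The paper's version, in exchange, stays closer to the presheaf data it has already set up and reuses the lemma it established for the applications section.
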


\begin{proof}
If ${\bf A}\in \mathbb{V}$, since ${\bf S}({\bf D}({\bf A}))$ is a subalgebra of ${\bf A}$ we get that ${\bf T}({\bf A})$ is an object in $\sh{T}_{\mathbb{V}}.$ Then it is only left to verify that the restriction of ${\bf T}$ to $\mathbb{V}$ is dense on $\sh{T}_{\mathbb{V}}.$ To this aim, consider a triple $({\bf B}, {\bf D}, \phi)\in \sh{T}_{\mathbb{V}}$, i.e., ${\bf D}\in \mathbb{V}^*.$ Following the proof of Theorem \ref{teo:T_denso}, the algebra $\A=\mathcal{A}(\langle {\bf B}, {\bf D}, \varphi\rangle)$ constructed in Theorem \ref{teo:algebra_of_global_sections} is such that ${\bf T}({\bf A})$ is isomorphic to $({\bf B}, {\bf D}, \phi).$ To see that ${\bf A}$ is in $\mathbb{V}$ we just see that ${\bf A}$ is the algebra of continuous global sections of a sheaf associated to the presheaf $\{ {\bf S}({\bf D}/\varphi(a))\}_{a\in {\bf B}} $.  But since each ${\bf D}/\varphi(a)$ is a homomorphic image of a residuated lattice in $\mathbb{V}^*$, then by Lemma \ref{lemma:subvariedades} each directly indecomposable algebra ${\bf S}({\bf D}/\varphi(a))$ is in $\mathbb{V}$ and we get the desired result.
\end{proof}

Observe that if $\mathbb{B}$ is the category of Boolean algebras, then $\mathbb{B}^{\ast}$ has as only element the trivial algebra $\{\top\}.$ Then the triples in the category  $\sh{T}_{\mathbb{B}}$ can be distinguished only by the first component, the Boolean part.

\begin{example} Let $\mathbb{DSRL}$ be the subvariety of $\mathbb{SRL}$ whose elements are distributive Stonean residuated lattices. Thus $\mathbb{DSRL}^{\ast}$ is the variety of distributive residuated lattices and the category $\mathbb{DSRL}$ is equivalent to the category $\sh{T}_{\mathbb{DSRL}}$ whose objects are triples such that the second component is a distributive residuated lattice.
\end{example}

\begin{example}\label{product_example} Consider the subvariety $\mathbb{C}$ of $\mathbb{RL}$ characterized by the equations:
\begin{align*}
x\to x\ast y &= y\\
x\ast(x\to y) &= x\wedge y\\
(x\to y)\vee (y\to x) &=\top.
\end{align*}
That is, $\mathbb{C}$ is the variety of cancellative divisible and prelinear residuated lattices (cancellative basic hoops). The reader can verify that the variety $\mathbb{P}$ of product algebras is such that $\mathbb{P}^{\ast}=\mathbb{C}.$ Then the functor ${\bf T}$ restricted to $\mathbb{P}$ provides a categorical equivalence for product algebras. We will come back to this example in Section \ref{product_algebras}.
\end{example}

To close this section we state the following result:

\begin{lemma}\label{thmeqn}Let $\tau(x_1,\ldots,x_n)=\top$ be an equation in the language of residuated lattices.
A variety $\bb{V}\subset\bb{SRL}$ is such that $\bb{V}^\ast$ satisfies the equation if and only if $\bb{V}$ satisfies
\begin{align}\label{eqdense}\tau(\neg\neg x_1\to x_1,\ldots,\neg\neg x_n\to x_n)=\top.\end{align} \end{lemma}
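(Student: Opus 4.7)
The plan is to prove each direction separately, relying on two key observations: first, that for any $\A \in \bb{SRL}$ and $x \in A$, the element $\neg\neg x \to x$ is dense (since $\neg\neg(\neg\neg x \to x) = \top$, already noted after Lemma~\ref{stonedecomposition}); and second, that $\D(\A)$, being an i-filter, is a subalgebra of the residuated lattice reduct of $\A$, so that any term $\tau$ in the residuated lattice signature (which does not involve $\bot$) evaluated at elements of $D(\A)$ gives the same value whether computed in $\D(\A)$ or in $\A$.

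For the forward direction, assume $\bb{V}^\ast$ satisfies $\tau=\top$, let $\A \in \bb{V}$, and pick $a_1,\ldots,a_n \in A$. By Lemma~\ref{embedding}, $\mbf{S}(\D(\A))$ is (isomorphic to) a subalgebra of $\A$, so $\mbf{S}(\D(\A)) \in \bb{V}$ and consequently $\D(\A) \in \bb{V}^\ast$. Set $d_i := \neg\neg a_i \to a_i \in D(\A)$. Then $\tau(d_1,\ldots,d_n) = \top$ holds in $\D(\A)$ by hypothesis, and since $\D(\A)$ is a residuated-lattice subalgebra of $\A$, the same equality holds in $\A$. This yields $(\ref{eqdense})$.

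For the converse, assume $\bb{V}$ satisfies $(\ref{eqdense})$, let $\D \in \bb{V}^\ast$ so that $\mbf{S}(\D) \in \bb{V}$, and pick $d_1,\ldots,d_n \in D$. By Theorem~\ref{const}, $D(\mbf{S}(\D)) = D$, so each $d_i$ is dense in $\mbf{S}(\D)$, that is, $\neg\neg d_i = \top$, hence $\neg\neg d_i \to d_i = \top \to d_i = d_i$ by Lemma~\ref{basic}$(\ref{timx})$. Applying $(\ref{eqdense})$ in $\mbf{S}(\D)$ at the tuple $(d_1,\ldots,d_n)$ therefore gives $\tau(d_1,\ldots,d_n) = \top$ in $\mbf{S}(\D)$, and since $\D$ is a residuated-lattice subalgebra of $\mbf{S}(\D)$, the equation $\tau(d_1,\ldots,d_n) = \top$ holds in $\D$. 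Thus $\bb{V}^\ast \models \tau = \top$.

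There is no substantial obstacle here: the argument is essentially a clean bookkeeping exercise using the known interactions between $\D(\A)$, $\mbf{S}(\D(\A))$, and $\A$, together with the fact that $\neg\neg x \to x$ is always dense. The only minor point requiring care is noting that $\tau$ involves only the residuated lattice operations $\{\ast,\to,\lor,\land,\top\}$, so that restricting to the subalgebra $\D$ is legitimate and no issue arises from the missing $\bot$.
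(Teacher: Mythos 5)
Your proposal is correct and follows essentially the same argument as the paper: the forward direction uses Lemma~\ref{embedding} to place $\D(\A)$ in $\bb{V}^\ast$ and the density of $\neg\neg x \to x$, while the converse evaluates $(\ref{eqdense})$ in $\mbf{S}(\D)$ and uses $\neg\neg d \to d = d$ for dense $d$. The only difference is that you spell out the bookkeeping (the subalgebra relationships and the absence of $\bot$ in $\tau$) that the paper leaves implicit.
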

\begin{proof}Assume that
$\bb{V}^\ast$ satisfies $\tau=\top$ and let $\A\in\bb{V}$. Since by Lemma~\ref{embedding}, $\D(\A)$ is in $\bb{V}^\ast$ and for each element $x\in A$ we have $\neg\neg x\to x\in D(\A)$, then  (\ref{eqdense}) is satisfied in $\A$. On the other hand, if each element of $\bb{V}$ satisfies (\ref{eqdense}), then for $\D\in\bb{V}^\ast$ we have that $\mbf{S}(\D)$ satisfies (\ref{eqdense}) and as $\neg\neg d\to d=d$ for each $d\in D$, we get that
$\D$ satisfies $\tau=\top$ as desired.\end{proof}

\subsection{Free algebras}

Free algebras in a variety $\bb{V}$ of Stonean residuated lattices are described in \cite{Cig07}. We will recall the description and explicitly show the triple associated to the finitely generated free algebra in $\bb{SRL}$.

For each set of generators $X$ and each variety $\bb{V}$, let $\rm{Free}_\bb{V}(X)$ be the free algebra over $X$ in $\bb{V}$.

By Theorem 3.1 in \cite{Cig07}, for each variety $\bb{V}$ of Stonean residuated lattices, the Boolean skeleton of the free algebra over the set $X$ is completely described as  $$B(\rm{Free}_\bb{V}(X))\cong \rm{Free}_\bb{B}(X)\cong {\mbf B}^{2^X},$$
where $\bb{B}$ is the variety of Boolean algebras and $\mbf B$ is the two element Boolean algebra.

Now, from Theorem 3.6 in \cite{Cig07}, for each variety $\bb{V}$ of Stonean residuated lattices such that $\bb{B}\subsetneq\bb{V}$, the free algebra of Stonean residuated lattices over the set of generators $X$, $\rm{Free}_\bb{V}(X)$, is representable as a weak Boolean product of the family $\left({\mbf S}(\rm{Free}_{\bb{V}^\ast}(Y))\right)_{Y\subset X}$ over the Cantor space ${\mbf 2}^X$ (the dual of $\rm{Free}_\bb{B}(X)$).

Therefore the dense elements $D(\rm{Free}_\bb{V}(X))$ will be a subdirect product of the family $\left(\rm{Free}_{\bb{V}^\ast}(Y)\right)_{Y\subset X}$, and for ${\mbf b}=(b_Y)_{Y\subset X}$ we have $$\phi({\mbf b})=\{{\mbf d}\in D(\rm{Free}_\bb{V}(X)): {\mbf d}\geq \neg{\mbf b}\}.$$

For the finite case, recall that Boolean products coincide with direct products, so from \cite[Co. 3.7]{Cig07} we obtain the following result.

\begin{theorem}For each variety $\bb{V}$ of Stonean residuated lattices such that $\bb{B}\subsetneq\bb{V}$ and each $n\geq 1$,
$$\rm{Free}_\bb{V}(n)\cong\prod_{k=0}^n \left({\mbf S}(\rm{Free}_{\bb{V}^\ast}(k))\right)^{n \choose k}.$$
Therefore its associated triple is isomorphic to $({\mbf B}^{2^n},\D,\phi)$, where $\mbf B$ is the two element Boolean algebra as before,
$$\D\cong\prod_{k=0}^n \left(\rm{Free}_{\bb{V}^\ast}(k)\right)^{n \choose k}\cong\prod_{j=1}^{2^n} \rm{Free}_{\bb{V}^\ast}(k_j) ,$$
where we define for $j=1,\ldots,2^n$
$$k_j=\min\left\{k: \sum_{l=0}^k {n\choose l} \geq j\right\}$$
and for ${\mbf b}=(b_1,\ldots,b_{2^n})\in {\mbf B}^{2^n}$ we have that
$$\phi(b_1,\ldots,b_{2^n})=\prod_{j=1}^{2^n} \phi_{j,b_j}$$
where
$$\phi_{j,b_j}=\left\{\begin{array}{ll}\{\top\}, & b_j=0\\ \rm{Free}_{\bb{V}^\ast}(k_j), & b_j=1 \end{array}\right. .$$
\end{theorem}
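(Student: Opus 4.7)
The first isomorphism is the finite-product specialization of the weak Boolean product representation discussed just before the statement, and I would simply invoke \cite[Co. 3.7]{Cig07}. So the real content is the explicit description of the triple $\T(\rm{Free}_\bb{V}(n))$ associated with this product, which I would compute in three short steps.

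First, since $\B(\cdot)$ and $\D(\cdot)$ both commute with direct products (negation in a product is coordinatewise, so an element is Boolean, respectively dense, iff each of its components is), it suffices to compute the triple of a single factor ${\mbf S}(\rm{Free}_{\bb{V}^\ast}(k))$ and then take direct products. By Theorem~\ref{const}, $\D({\mbf S}({\mbf C}))\cong{\mbf C}$; and by Lemma~\ref{embedding} together with Theorem~\ref{indecomposable}, ${\mbf S}({\mbf C})$ is directly indecomposable, so $\B({\mbf S}({\mbf C}))\cong\mbf{B}$. Multiplying over the $\sum_{k=0}^n\binom{n}{k}=2^n$ factors yields
\[\B(\rm{Free}_\bb{V}(n))\cong\mbf{B}^{2^n}\qquad\text{and}\qquad\D(\rm{Free}_\bb{V}(n))\cong\prod_{k=0}^n\left(\rm{Free}_{\bb{V}^\ast}(k)\right)^{n\choose k}.\]
The second formula for $\D$ given in the statement is just a relabelling of the coordinates of this product: the rule defining $k_j$ lists the value $k$ exactly $\binom{n}{k}$ times in non-decreasing order, so the two products are literally the same up to a permutation of indices.

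Second, for the connecting map I would compute $\phi(\mbf{b})=[\neg\mbf{b})\cap\D(\rm{Free}_\bb{V}(n))$ coordinatewise, again using that negation in a direct product is coordinatewise. In each coordinate $j$ the Boolean factor is $\mbf{B}=\{\bot,\top\}$: if $b_j=\bot$ then $\neg b_j=\top$, so the $j$-th component of a tuple in $\phi(\mbf{b})$ is forced to be $\top$ and $\phi_{j,\bot}=\{\top\}$; if $b_j=\top$ then $\neg b_j=\bot$ and no constraint is imposed, so $\phi_{j,\top}$ is all of $\rm{Free}_{\bb{V}^\ast}(k_j)$. Taking the product over $j=1,\ldots,2^n$ produces exactly the displayed formula $\phi(b_1,\ldots,b_{2^n})=\prod_{j=1}^{2^n}\phi_{j,b_j}$.

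The only mildly delicate step is checking that this coordinatewise formula really matches $\phi_{\rm{Free}_\bb{V}(n)}$ under the isomorphism of the first part — i.e., that the composition of the product decomposition with the triple-functor $\T$ intertwines the two connecting maps — but this is immediate because $\T$ is a functor and products in $\sh{T}$ (inherited from $\mathbb{SRL}$) are computed componentwise. All the real work has already been done in \cite[Co. 3.7]{Cig07}; the remaining obstacle is purely bookkeeping around the reindexing via $k_j$.
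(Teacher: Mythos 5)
Your proposal is correct and follows essentially the same route as the paper: the paper likewise obtains the product decomposition by specializing the weak Boolean product representation of \cite[Th. 3.6]{Cig07} to the finite case via \cite[Co. 3.7]{Cig07}, and leaves the description of the triple as the implicit coordinatewise computation that you spell out (Boolean skeleton and dense elements commute with finite direct products, each factor $\mathbf{S}(\rm{Free}_{\bb{V}^\ast}(k))$ being directly indecomposable with dense part $\rm{Free}_{\bb{V}^\ast}(k)$, and $\phi(\mbf{b})=[\neg\mbf{b})\cap D$ evaluated coordinate by coordinate). Your filled-in details, including the reindexing via $k_j$, are accurate.
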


\subsection{Product algebras and product triples}\label{product_algebras}

Product algebras are the algebraic counterpart of product fuzzy logic (\cite{CT}). They are commutative integral bounded residuated lattices which are also prelinear, divisible and satisfy the extra equation:
\begin{equation}\label{eq:producto}
\neg x \vee ((x\to (x\ast y))\to y)=\top
\end{equation}
Since these algebras are pseudocomplemented (i.e., they satisfy the equation $x\wedge \neg x=\bot$) and prelinear, they form a proper subvariety $\mathbb{P}$ of  Stonean residuated lattices.

Following Example \ref{product_example} we already know that the restriction of the functor ${\mbf T}$ to $\mathbb{P}$ defines a categorical equivalence with the subcategory of triples $\sh{T}_{\mathbb{P}}.$

\medskip

In \cite{MontUgo} Montagna and Ugolini proved a categorical equivalence between $\mathbb{P}$ and a category of \textit{product triples} $(\mbf B, \mbf C, \vee_e)$ such that $\mbf B$ is a Boolean algebra, $\mbf C$ a cancellative hoop and $\vee_e:\B\times \mbf C\to \mbf C$ a function satisfying
\begin{itemize}
	\item[(V1)] For fixed $b\in B, c\in C$, $(b\vee_e\cdot):\mbf C\to\mbf C$ is a cancellative hoop morphism and $(\cdot\vee_e c):\mbf B\to \mbf C$ is a lattice morphism.
	\item[(V2)] For $b\in B, c\in C$, $0\vee_e c= c$ and $1\vee_e c=1$.
	\item[(V3)] For $b,b'\in B, c,c'\in C$, $$(b\vee_e c)\vee (b'\vee_e c')=(b\vee b')\vee_e(c\vee c')=b\vee_e(b'\vee_e(c\vee c')).$$
	\item[(V4)] For $b\in B, c,c'\in C$, $(b\vee_e c)c'=(\neg b\vee_e c')\wedge (b\vee_e (cc'))$
\end{itemize}

Given a product algebra $\mbf P$, its associated product triple is $$(\mbf B(P), \mbf C(P), \vee_e),$$ where $\mbf B(P)$ is the algebra of Boolean elements of $\mbf P$, $\mbf C(P)$ the greatest cancellative hoop of $\mbf P$ (that coincides with the algebra of dense elements $\mbf D(P)$) and $\vee_e$ is the supremum restricted to $B(P)\times C(P)$.

Given a product triple $(\mbf B, \mbf C, \vee_e)$, one can recover the product algebra defining appropriately the algebra operations in the set $(B\times C)/\sim$, where $\sim$ is the equivalence relation given by
\begin{align*}(b,c)\sim(b',c')\text{ if and only if } b=b' \text{ and } \neg b\vee_e c=\neg b\vee_e c'.\end{align*}

The algebra thus obtained is denoted $\mbf B(P) \otimes_{\vee_e}\mbf C(P)$ and is isomorphic to $\mbf P$.

\medskip

Then we have two similar categorical equivalences for product algebras, both given by triples whose first two coordinates coincide. We will now sketch how to go from one to the other.

Let $R=(\B, \D,\vee_e)$ be a product triple as defined by Montagna and Ugolini. Define $$\phi_R(b)=\{d\in \D: d=\neg b\vee_e d\}.$$  Then $(\B, \D,\phi_R)$ is an object of $\sh{T}_{\mathbb{P}}.$  If ${\mbf A}$ is a product algebra and $R=(\B, \D,\vee_e)$ is its corresponding product triple, then it is not hard to check that $\T({\mbf A})=(\B, \D,\phi_R)$.

On the other hand, given an element $Q=(\B, \D,\phi)$ in the category $\sh T$ such that $\D$ is cancellative, divisible and prelinear, we will proceed to define $\vee_Q:\B\times \D \to \D$.

Let ${\mbf A}\in \bb{RL}$. We say  that  $G \in \mathcal{F}_l(\mathbf{A})$ is a \textit{central filter} provided that there is $G^\prime \in \mathcal{F}_l(\mathbf{A})$ such that for every  filter $F \in \mathcal{F}_l(\mathbf{A})$ the following equalities hold:
\bg{equation}\label{eq:centralfilter} F = (F \cap G) \lor (F \cap G^\prime) = (F \lor G) \cap (F \lor G^\prime).\end{equation}
Central filters are the central elements of the lattice $\mathcal{F}_l(\mathbf{A})$, i.~e., the  neutral and complemented elements (see \cite[Theorem(4.13)]{MaeMae}). Hence we can easily adapt the results that Gr\"{a}tzer gives on lattice ideals in  \cite[Page 152]{Gra78} to prove the following lemma:
\bg{lemma}\label{gratzer} Let $\A \in \bb{RL}$ and let $G$ be a central filter of $\cc{F}_l(\A)$. For each $x \in A$ there is a unique $z \in A$ such that $[x) \cap G = [z)$.\e{lemma}

For the triple $Q$, since $\D$ is distributive $\phi(b)$ is clearly central in $\mathcal{F}_l(\mathbf{D})$ for each $b\in\mbf B$ (take $G^\prime=\phi(\neg b)$ and see Remark \ref{lattice_filter_dist}), so it follows from  Lemma~\ref{gratzer} that for each $b \in \mbf B$ we can define a function $$\fun{\rho_{b}}{\D}{\phi(b)}$$ by the prescription $$z = \rho_{b}(d) \,\,\, \mbox{ if and only if } \,\,\,[d) \cap \phi(b) = [z).$$ As a matter of fact, if $Q={\mbf T}(\mbf A)$ for some product algebra $\mbf A$ it is easy to see that $\rho_{b}(d) = \neg b \lor d = b\to d$, but note that $\rho_b$ can be expressed in terms of only the triple $(\B, \D,\phi)$ (cf.\cite{CheGra1},\cite[II.\S6 Theorem 5]{Gra78}).

Thus we can define $\vee_Q:\B\times \D \to \D$ by
$$b\vee_Q d=\rho_{\neg b}(d).$$ We leave as an exercise to check that $(\B, \D, \vee_Q)$ is the product triple associated to $\A.$

\medskip

The results of \cite{MontUgo} are generalized in \cite{AgFlUgo} for some subvarieties of MTL-algebras. We recall that MTL-algebras are the algebraic counterpart of the logic corresponding to monoidal t-norms (MTL, see \cite{EsGo}). The authors prove a categorical equivalence between strongly perfect MTL-algebras and a category of triples whose first component is a Boolean algebra, the second is a prelinear semihoop and the third is the extra operator $\vee$ as defined for product triples. Pseudocomplemented  strongly perfect MTL-algebras are Stone residuated lattices and prelinear Stone residuated lattices are strongly perfect MTL-algebras. So one can extend the previous comparison for the case of the intersection of the varieties of Stone residuated lattices and strongly perfect MTL-algebras.

\section*{Acknowledgements}

The authors would like to thank Sara Ugolini for the remark that helped us prove Theorem \ref{f_morphism} without using distributivity.


Manuela Busaniche\\
Instituto de Matem\'{a}tica Aplicada del Litoral, UNL, CONICET, FIQ\\
Predio Dr. Alberto Cassano del CCT-CONICET-Santa Fe\\
Colectora de la Ruta Nacional no. 168\\
Santa Fe, Argentina\\
\texttt{mbusaniche@santafe-conicet.gov.ar}

\medskip

Roberto Cignoli\\
Universidad de Buenos Aires\\
Facultad de Ciencias Exactas y Naturales\\
Departamento de Matem\'{a}tica\\
Ciudad Universitaria\\
1428 Buenos Aires, Argentina.\\
\texttt{cignoli@dm.uba.ar}

\medskip

Miguel Andr\'es Marcos\\
Instituto de Matem\'{a}tica Aplicada del Litoral, UNL, CONICET, FIQ\\
Predio Dr. Alberto Cassano del CCT-CONICET-Santa Fe\\
Colectora de la Ruta Nacional no. 168\\
Santa Fe, Argentina\\
\texttt{mmarcos@santafe-conicet.gov.ar}

  \e{document}